\newdimen\AAdi%
\newbox\AAbo%
\def\AAk#1#2{\s_etbox\AAbo=\hbox{#2}\AAdi=\wd\AAbo\kern#1\AAdi{}}%
\def\AAr#1#2#3{\s_etbox\AAbo=\hbox{#2}\AAdi=\ht\AAbo\raise#1\AAdi\hbox{#3}}%
\font\tenmsb=msbm10 at 12pt \font\sevenmsb=msbm7 at 8pt
\font\fivemsb=msbm5 at 6pt
\newtheorem{thm}{Theorem}[section]
\newtheorem{rem}[thm]{Remark}
\newtheorem{pro}[thm]{Proposition}
\newtheorem{defi}[thm]{Definition}
\newcommand{\Section}[2]{\setcounter{equation}{0}
\allowdisplaybreaks
\section[#1]{#2}}
\def\f#1#2{\frac{#1}{#2}}
\def\p#1{\partial #1}
\def\la{\lambda}
\def\Om{\Omega}
\def\Re{\text{Re }}
\def\Im{\text{Im }}
\subjclass[2010]{58E20,~53A10,~53C42.}
\begin{document}
\pagenumbering{Roman}\setcounter{page}{1}

\pagenumbering{arabic} \setcounter{page}{1}
\title{On instability of Type (II) Lawson-Osserman Cones}
\author{Zhaohu\ Nie}
\address{
            Department of Mathematics and Statistics,
            Utah State University,
            Logan, UT 84322-3900,
            USA}
            \email
 {
zhaohu.nie@usu.edu
}

\author{Yongsheng\ Zhang}
\address{
            School of Mathematical Sciences and Institute of Advanced Study, Tongji University, Shanghai, 200092, China}\email
 {
yongsheng.chang@gmail.com
}

\date{}


\begin{abstract}

We obtain the instability of Type (II) Lawson-Osserman cones in Euclidean spaces, and thus
provide a family of (uncountably many) 
unstable solutions 
with singularity to the Dirichlet problem for minimal graphs of  high codimension
versus smooth unstable ones by Lawson-Osserman 
using a min-max technique.
To our knowledge,
these are the first examples of non-smooth unstable minimal graphs
and unlikely detectible by the mean curvature flow or min-max theory.

\end{abstract}
\maketitle
\renewcommand{\proofname}{\it Proof.}
\Section{Introduction}{Introduction}
Given an open, bounded, and strictly convex $\Omega\subset\mathbb R^{n+1}$ $(n\geq 1)$ and a continuous map $\phi:\p \Omega\rightarrow {\mathbb R}^{m+1}$ (called the boundary data),
           the \textbf{Dirichlet problem} (cf. \cite{j-s,b-d-m, de, m1,l-o}) 
           searches for $\Phi\in C^0(\bar \Omega)\cap \mbox{Lip}(\Omega)$ taking values in ${\mathbb R}^{m+1}$ such that
 %
the graph of $\Phi$ is a minimal submanifold in $\mathbb R^{m+n+2}$
and
         $
          \Phi|_{\partial \Omega}=\phi.
          $

When $m=0$, 
     for any continuous boundary data 
                    there exists a unique Lipschitz solution
       according to J. Douglas \cite{d},
                    T. Rad\'o \cite{r,r2},
                    Jenkins-Serrin \cite{j-s}
                    and Bombieri-de Giorgi-Maranda \cite{b-d-m}.                     
                    Furthermore,  
                    the solution is in fact analytic
                    by E. de Giorgi \cite{de} and J. Moser \cite{m1}, 
                    and its graph turns out to be area-minimizing, e.g. see \cite{fe}.

When $m\geq 1$, situations are completely different.
          With $\Om=\mathbb B^{n+1}$ (the unit ball, the case that we shall consider in this paper),
          Lawson-Osserman \cite{l-o}
          constructed
          real analytic boundary data 
          for $n,m\geq 1$ for which there exist at least three analytic solutions;
          boundary data for which the problem is not solvable
          for $n\geq m+1\geq 3$;
          and boundary data that support
          Lipschitz but non-$C^1$ solutions.
          
          Recently 
systematic developments on Lawson-Osserman constructions in \cite{l-o} 
have been made in \cite{x-y-z0, z}. 
In particular, uncountably many boundary data are discovered in \cite{x-y-z0}, 
         each of which supports
          infinitely many analytic solutions and at least one singular solution. 
          The graph of such a singular solution is just the cone 
          over the graph of the boundary data, and it is
          called a Type {\bf (II)} Lawson-Osserman cone.
          In more details, 
          the boundary data are suitable multiples of 
          Type {\bf (II)} LOMSEs (see Definition \ref{LOMSE} below) between unit spheres
          and
          the corresponding countably many analytic solutions have graphs with increasing volumes.
          The limit is the volume of the truncated 
          Lawson-Osserman cone.
          Therefore, such cone is not area-minimizing.
         Since these analytic solutions do not form a continuous family  
         around the Lawson-Osserman cone yet,
         it remains open whether the cone is stable or not. 
         In this paper, we settle this question.
         
          \begin{thm}\label{t1}
          Type {\bf (II)} Lawson-Osserman cones are all unstable.
          \end{thm}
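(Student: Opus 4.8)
The plan is to produce one compactly supported normal variation of the cone along which the second variation of area is strictly negative, which is what instability asks for. Throughout, ``the cone'' may be taken to be the truncated cone (equivalently the graph over $\mathbb B^{n+1}$ of the singular solution), with variations supported in a compact annulus avoiding both the vertex and the fixed boundary; since a minimal cone is scale invariant this loses nothing.

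\emph{Step 1 (reduction to the link).} Write the Type (II) cone as $C=\{r\,p:r>0,\ p\in\Sigma\}\subset\mathbb R^{n+m+2}$, where
\[
\Sigma=\big\{(\cos\theta_0\,\omega,\ \sin\theta_0\,f(\omega)):\omega\in S^n\big\}
\]
is the normalized graph of the suitable multiple $\tan\theta_0\cdot f$ of the Type (II) LOMSE $f:S^n\to S^m$, with $\theta_0$ the angle fixed by the LOMSE equation (Definition \ref{LOMSE}); then $\Sigma^n$ is minimal in $S^{n+m+1}$ and $\dim C=n+1$. Let $W$ be a \emph{unit} normal field along $\Sigma$, extended to be constant along rays, and set $V=\psi(r)\,W$ with $\psi\in C_c^\infty(0,1)$. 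Standard cone identities give $\nabla^\perp_{\partial_r}W=0$, that $\nabla^\perp_XW$ on $C$ coincides with $\nabla^\perp_XW$ on $\Sigma\subset S^{n+m+1}$, and $A_C(X,Y)|_{rp}=\frac1r A_\Sigma(X,Y)|_p$ for $X,Y\in T\Sigma$; since $|W|\equiv1$ the cross terms vanish and, using $dV_C=r^{n}\,dr\,dV_\Sigma$, the Euclidean second variation $\mathcal Q_C(V,V)=\int_C\big(|\nabla^\perp V|^2-\sum_{i,j}\langle A_C(e_i,e_j),V\rangle^2\big)\,dV_C$ separates:
\begin{multline*}
\mathcal Q_C(V,V)=\mathrm{Vol}(\Sigma)\int_0^1(\psi')^2r^{n}\,dr\\
+\Big(\int_\Sigma\big(|\nabla^\perp W|^2-|A_\Sigma(W)|^2\big)\,dV_\Sigma\Big)\int_0^1\psi^2r^{n-2}\,dr,
\end{multline*}
where $|A_\Sigma(W)|^2:=\sum_{i,j}\langle A_\Sigma(e_i,e_j),W\rangle^2$. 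The sharp weighted Hardy inequality $\int_0^1(\psi')^2r^{n}\,dr\ge\big(\frac{n-1}{2}\big)^2\int_0^1\psi^2r^{n-2}\,dr$ then shows that some $\psi$ makes $\mathcal Q_C(V,V)<0$ as soon as
\begin{equation*}
\int_\Sigma\big(|A_\Sigma(W)|^2-|\nabla^\perp W|^2\big)\,dV_\Sigma>\Big(\tfrac{n-1}{2}\Big)^2\mathrm{Vol}(\Sigma).
\end{equation*}
This inequality is exactly the failure, in the direction $W$, of the classical stability threshold for a minimal cone; so it remains to exhibit one unit normal field $W$ on $\Sigma$ satisfying it.

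\emph{Step 2 (the test field).} The natural candidate is $W=(-\sin\theta_0\,\omega,\ \cos\theta_0\,f(\omega))$, the velocity at $\theta_0$ of the family $\Sigma_\theta=\{(\cos\theta\,\omega,\sin\theta\,f(\omega)):\omega\in S^n\}$ of graphs in $S^{n+m+1}$: one checks at once that $W\perp T\Sigma$, that $W$ is orthogonal to the position vector, and that $|W|\equiv1$, so $W$ is a unit normal field along $\Sigma$. In a local frame on $S^n$ adapted to the singular values of $df$, the components $\langle A_\Sigma(e_i,e_j),W\rangle$ and the normal derivative $\nabla^\perp W$ become explicit in $\theta_0$, $f$ and $df$; substituting the LOMSE relation between $\theta_0$ and those singular values collapses $|A_\Sigma(W)|^2$ and $|\nabla^\perp W|^2$ to concrete pointwise functions on $\Sigma$, and the inequality of Step 1 to a definite integral inequality over $S^n$ in the data of $f$.

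\emph{Step 3 (the estimate --- the main obstacle).} What remains is to verify
\begin{equation*}
\int_\Sigma\big(|A_\Sigma(W)|^2-|\nabla^\perp W|^2\big)\,dV_\Sigma>\Big(\tfrac{n-1}{2}\Big)^2\mathrm{Vol}(\Sigma)
\end{equation*}
for the $W$ of Step 2, and this is where the hypothesis genuinely enters: the inequality is expected to fail (or to be at best non-strict) for Type (I) cones, so its proof must invoke the sharp form of the condition that singles out Type (II) LOMSEs. Conceptually this is no accident. The defining feature of Type (II) --- under which the analytic solutions discussed in the introduction accumulate at the cone, with volumes increasing to its volume, yet fail to form a continuous family --- is precisely that the associated profile ODE has its regular solutions spiralling into the cone solution, i.e.\ the linearization at the cone carries an oscillatory mode; and that oscillation is the analytic content of the displayed threshold violation. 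Making this rigorous, by combining the explicit normal-bundle geometry of $\Sigma$ from Step 2 with the precise algebraic inequality isolating Type (II), is the one step I expect to demand real work. Should the bare field $W$ fail to clear the threshold for some Type (II) cone, one optimizes over the $df$-eigenspace decomposition of the normal bundle, or replaces $W$ by $\phi(\omega)\,W$ for a suitable weight $\phi$ on $S^n$ (so that a Rayleigh quotient on $S^n$ replaces the plain Hardy estimate), which can only strengthen the bound.
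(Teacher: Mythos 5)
Your Steps 1--2 set up a correct and standard framework: the separation of the second variation over the cone into a radial Hardy-type integral and a link integral is right, the sharp constant $\left(\tfrac{n-1}{2}\right)^2$ is right, and the test field $W=(-\sin\theta_0\,\omega,\cos\theta_0\,f(\omega))$ is exactly the right direction to probe --- it is the normal direction singled out by the submanifold $W$ of \eqref{W}, and one can check that your threshold inequality for this $W$ is algebraically equivalent to the paper's condition $1-4a<0$ with $a$ as in \eqref{def a} (the exponents $l=\tfrac{-(n-1)\pm\sqrt{(n-1)^2-4c}}{2}$ of your radial Euler equation, $c=|A_\Sigma(W)|^2-|\nabla^\perp W|^2$, agree with $(n+1)\tfrac{1\pm\sqrt{1-4a}}{2}-n$ coming from \eqref{sJ} after the change of variable $s\propto r^{n+1}$ and the conformal factor $\propto r^{n}$). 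So the strategy is viable and, in substance, tests the same variations as the paper.

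The problem is that Step 3 --- the only step that distinguishes Type {\bf (II)} from Type {\bf (I)} and hence carries the entire content of the theorem --- is not carried out; you explicitly defer it as ``the one step I expect to demand real work,'' and you even hedge on whether the chosen $W$ clears the threshold at all. Without it there is no proof. To close the gap you must compute $|A_\Sigma(W)|^2$ and $|\nabla^\perp W|^2$ explicitly on the link: in a frame adapted to the singular values $\{0,\lambda\}$ one finds, with $\mu_i^2=\cos^2\theta_0+\lambda_i^2\sin^2\theta_0$, that $\langle A_\Sigma(E_i,E_j),W\rangle=-\sin\theta_0\cos\theta_0(\lambda_i^2-1)\delta_{ij}$ and $|\nabla^\perp_{E_i}W|^2=\lambda_i^2$, so that $c=p\,\tfrac{\sin^2\theta_0\cos^2\theta_0(\lambda^2-1)^2-\lambda^2}{\mu^4}+(n-p)\tan^2\theta_0$; one must then substitute the LOMSE balancing relation \eqref{sv} (equivalently \eqref{tlpn}) and $\lambda^2=k(k+n-1)/p$ from \eqref{la}, and verify that $c>\left(\tfrac{n-1}{2}\right)^2$ holds precisely for the $(n,p,k)$ lists defining Type {\bf (II)} --- i.e.\ redo the sign analysis of the quadratic \eqref{eqnk} in $k$ that the paper performs. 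The paper organizes this same computation differently (Gaussian curvature of the conformal metric \eqref{g0} on the $r\rho$-half-plane, an Euler--Jacobi equation along the LOC geodesic, and conjugate points when $1-4a<0$), which is arguably cleaner since all the link geometry is absorbed into the single volume function $V(r,\rho)$; but whichever packaging you choose, the quantitative verification is indispensable and is what your proposal omits.
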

          
          The idea is to study a corresponding quotient space endowed with a canonical metric.
          Inspired by \cite{b, h-l, l} for orbit spaces, 
          we focus on a preferred subspace $W$ (by \ref{W}) associated to the given LOMSE
          and the quotient space of the subspace.
          With canonical metric, the length of any curve in the quotient space 
          equals the volume of the corresponding submanifold in the Euclidean space.
          Hence, the above solutions induce geodesics in the quotient space
          connecting a fixed point (associated to the LOMSE) and the origin (a boundary point of the quotient space).
          Consequently, we have a LOC curve standing for the entire Lawson-Osserman cone and
          a LOC segment 
          for the truncated part respectively.

                      Note that the instability of LOC segments for Type {\bf (II)} naturally implies
          the instability of the truncated Lawson-Osserman cones.
          However, it is not simple to gain stability for Type {\bf (I)} in Euclidean spaces 
          (besides those area-minimizing LOCs of $(n,p,2)$-type proven in \cite{x-y-z}).
          In this paper, we show that 
          an LOC segment for  
          %
          Type {\bf (I)} 
          turns out to be locally length-mimimizing 
          (namely length-minimizing in certain angular sector that contains the LOC curve)
          as announced in \cite{zh}.
          As a result, we have
          
          \begin{thm}\label{t2}
          LOC segments for Type {\bf (I)} Lawson-Osserman cones are all stable.
          \end{thm}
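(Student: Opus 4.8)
The plan is to carry the problem into the quotient space $Q$ with its canonical metric, exactly as in the proof of Theorem \ref{t1}: via the preferred subspace $W$ of \ref{W}, an LOC segment for a Type {\bf (I)} LOMSE becomes a geodesic segment $\sigma$ joining the point $P$ attached to the LOMSE to a point $P_0$ lying strictly between $P$ and the origin. Since length in $Q$ records volume in $\mathbb R^{m+n+2}$, \emph{stability of the LOC segment is equivalent to $\sigma$ having no interior conjugate points, i.e.\ to $\sigma$ being locally length-minimizing in $Q$}. Following the announcement in \cite{zh}, I would prove the sharper statement that $\sigma$ is length-minimizing among all curves lying in a fixed angular sector $\mathcal S$ about $\sigma$ with the same endpoints; this a fortiori gives Theorem \ref{t2}.

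The first step is to make the canonical metric explicit on the two-dimensional profile surface $\Pi\subseteq Q$ swept out by the symmetric (equivariant) solutions: from the ODE descriptions in \cite{x-y-z0, z} it acquires a warped form $ds^2 = A(r,\theta)\,dr^2 + B(r,\theta)\,d\theta^2$ in coordinates for which the LOC curve is a ray $\theta=\theta_\ast$ (away from $r=0$, which is harmless since we truncate). The point of the Type {\bf (I)} hypothesis is that the extremal equation on $\Pi$ then has a first integral of Clairaut type and the associated Weierstrass excess function has the sign forcing local minimality; I would extract this sign directly from the LOMSE ODE as the analytic core of the argument, noting that it is exactly this sign that reverses in the Type {\bf (II)} case and makes those cones unstable by Theorem \ref{t1}.

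The second step is to turn this into a calibration on $\mathcal S$. I would construct a Mayer field of extremals filling $\mathcal S$ and containing $\sigma$, and let $u:\mathcal S\to\mathbb R$ be the corresponding transversality (eikonal) function, so that $du$ is a closed $1$-form with $|du|\le 1$ throughout $\mathcal S$ and $|du|=1$ exactly along $\sigma$. Then for any competitor $\gamma\subset\mathcal S$ with the same endpoints, $\mathrm{Length}(\gamma)\ge\int_\gamma du = u(P_0)-u(P)=\int_\sigma du=\mathrm{Length}(\sigma)$, which is the desired local minimization, hence stability.

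The main obstacle I anticipate is constructing the extremal field — equivalently, verifying the global comass bound $|du|\le 1$ — on a sector that genuinely contains $\sigma$ in its interior: this is not a perturbative statement and requires quantitative, non-focal control of the whole family of extremals through the LOMSE ODE, which is precisely where the Type {\bf (I)}/{\bf (II)} dichotomy becomes visible. A secondary technical point is to check that passing from $Q$ to the profile $\Pi$ loses no variations relevant to stability and that $u$ extends $C^1$ up to the two bounding rays of $\mathcal S$; the degeneration of the metric as $r\to 0$ does not intervene, since truncation keeps $\mathcal S$ bounded away from the apex.
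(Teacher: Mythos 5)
Your strategy is viable and in fact coincides with what the paper itself does in \S\ref{S7}: foliate an angular sector $\mathscr S$ around the LOC ray by dilations of two geodesics coming from orbits of \eqref{ODE2}, and calibrate. That gives the stronger local length-minimality announced in \cite{zh}, and a fortiori stability. But note that the paper's designated proof of Theorem \ref{t2} is different and more economical (Theorem \ref{t3} in \S\ref{S5}): compute the Gaussian curvature of the quotient metric \eqref{g0} along the ray, find $K(s)=a/s^2$ with $a$ as in \eqref{def a}, observe that the Jacobi equation $J''+KJ=0$ is then an Euler equation whose solutions $C_1s^{(1+\sqrt{1-4a})/2}+C_2s^{(1-\sqrt{1-4a})/2}$ cannot vanish twice when $1-4a\ge 0$, and check that $1-4a\ge0$ holds exactly for the Type {\bf (I)} values of $(n,p,k)$. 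That computation buys an if-and-only-if statement settling Theorems \ref{t1} and \ref{t2} simultaneously; your calibration route buys the sharper minimizing statement but only for Type {\bf (I)}.

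Two points in your outline need repair before it is a proof. First, and decisively, the analytic core is deferred rather than executed: the ``sign'' that makes your Mayer field exist is precisely the Type {\bf (I)}/{\bf (II)} dichotomy, and one must actually verify from \eqref{la} and \eqref{lpn} that $(n+1)^2+8n\bigl(\tfrac{n}{k(k+n-1)}-1\bigr)\ge 0$ for the listed Type {\bf (I)} triples, i.e.\ that the stationary point $(\tan\theta,0)$ of \eqref{ODE2} is a node rather than a spiral; the paper does this via the inequality \eqref{eqnk}. Without that verification nothing in your argument distinguishes the two types. Second, the claimed Clairaut first integral does not exist: in polar-type coordinates the metric reads $h(\theta)\,[dt^2+(n+1)^2t^2\,d\theta^2]$ with $h$ genuinely depending on $\theta$, so no coordinate is cyclic. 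The structure that is actually available, and that the paper uses, is dilation homogeneity ($g\mapsto c^{2n+2}g$ under $(r,\rho)\mapsto(cr,c\rho)$): it makes rescalings of geodesics geodesics, reduces the Jacobi equation to an Euler equation, and lets the two orbits $\gamma_1$ (from $(\varphi_1,0)$) and $\gamma_2$ (from the origin), both limiting to $(\varphi_0,0)$, sweep out the geodesic foliation of $\mathscr S$ by dilation — which also settles your worry about the field reaching the apex. With these two repairs your outline becomes essentially \S\ref{S7} of the paper.
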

          
          The paper is organized as follows.
         We shall briefly review
         Lawson-Osserman cones in \S\ref{S2}.
         The construction of canonical metric on quotient space mentioned above will be provided in \S\ref{S3}.
          It will be verified explicitly in \S\ref{S4} that the geodesic equation is equivalent to the minimality requirement\eqref{ODE1} of corresponding submanifold represented.
         Section \S\ref{S5} will be devoted to computations of instability of Type {\bf (II)} Lawson-Osserman curves on quotient spaces and stability of Type {\bf (I)}.
         We further explain in \S\ref{S6} 
         the interesting translation from behavior around the spiral stable fixed point of \eqref{ODE2} for Type {\bf (II)} 
         to Jacobi fields along the LOC curve.
         In \S\ref{S7} the construction of certain calibration forms can be achieved for the local length-minimality on the quotient space for Type {\bf (II)}.
          
   \noindent{\bf Acknowledgment.}       The authors would like to thank MPIM for warm hospitality and financial supports,  	
   where they conducted the research during their visits in Spring 2019. 
               The research of Z.N. is partially supported by the Simons Foundation through Grant \#430297. 
               The research of Y.Z. is sponsored in part by NSFC (Grant Nos. 11971352,\ 11601071), 
the S.S. Chern Foundation (through IHES),
and a Start-up Research Fund from Tongji University.
{\ }
\Section{Background on LOMSE}{Background on LOMSE}\label{S2}
Let us recall some materials from \cite{x-y-z0}.

\begin{defi}\label{loc}
For a smooth map $f: S^{n}\rightarrow S^{m}$,
            if there exists an acute angle $\theta$, such that 
        \begin{equation}\label{lo}
        M_{f,\theta} :=\left\{(\cos\theta\cdot x,\sin\theta\cdot f(x)):x\in S^{n}\right\}
        \end{equation}
is a minimal submanifold of $S^{m+n+1}$, then
$f$ is called a {\bf Lawson-Osserman map} (\text{LOM}),
$M_{f,\theta}$ the associated {\bf Lawson-Osserman sphere} (\text{LOS}),
and the cone $C_{f,\theta}$ over $M_{f,\theta}$ 
the associated {\bf Lawson-Osserman cone} (\text{LOC}).
\end{defi}
For $\phi=\tan\theta\cdot f$,
the cone over the graph of $\phi$ is minimal and determines
a Lipschitz but non-$C^1$ solution to the Dirichlet problem.
Therefore, it is important to establish a characterization of LOM.
Let $g$ be the induced metric on $S^n$ via \eqref{lo} from the standard metric $g_{m+n+1}$
of  $S^{m+n+1}$.

\begin{thm}[Characterization of LOM \cite{x-y-z0}]
For smooth $f:S^n\rightarrow S^m$ and $\theta\in (0,\pi/2)$,
$M_{f,\theta}$ is an LOS in $S^{n+m+1}$
if and only if the following conditions hold:
              \begin{itemize}
\item  $f:(S^n,g)\rightarrow (S^m,g_m)$ is harmonic.
\item For each $x\in S^n$ and the singular values $\lambda_1,\cdots,\lambda_n$  of
$(f_*)_x: (T_x S^n,g_n)\rightarrow (T_{f(x)} S^m,g_m)$, 
               namely $\{\lambda_i\}$ are the diagonals of $(f_*)^*g_m$ with respect to $g_n$,
we have
             \begin{equation}\label{sv}
             \sum\limits_{j=1}^n \frac{1}{\cos^2\theta+\sin^2\theta \lambda_j^2}=n.
             \end{equation}
              \end{itemize}
\end{thm}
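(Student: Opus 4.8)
The plan is to realize the Lawson--Osserman sphere as the image of the position-vector map $F\colon(S^n,g)\to S^{m+n+1}\subset\mathbb{R}^{n+1}\times\mathbb{R}^{m+1}$, $F(x)=(\cos\theta\cdot x,\sin\theta\cdot f(x))$, whose induced metric is $g=\cos^2\theta\,g_n+\sin^2\theta\,f^*g_m$. Since $\cos\theta\neq 0$ this $g$ is a genuine Riemannian metric, so $F$ is an isometric immersion; and for any isometric immersion of an $n$-manifold into the unit sphere, minimality of the image is equivalent to $\Delta_gF=-nF$, where $\Delta_g$ is the Laplace--Beltrami operator (the tension field $\tau_g$) applied componentwise. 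Thus the whole problem reduces to reading off the equation $\Delta_gF=-nF$ as a system in $\mathbb{R}^{n+1}\times\mathbb{R}^{m+1}$.

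Because $\Delta_g$ acts componentwise, $\Delta_gF=-nF$ is equivalent to the pair $\Delta_g\iota_{S^n}=-n\,\iota_{S^n}$ and $\Delta_g(\iota_{S^m}\circ f)=-n\,(\iota_{S^m}\circ f)$, where $\iota_{S^n},\iota_{S^m}$ are the standard inclusions. I would evaluate the left-hand sides using the composition formula for tension fields together with the fact that the second fundamental form of the unit sphere in Euclidean space is $-\langle\cdot,\cdot\rangle$ times the position vector, obtaining
\begin{align*}
\Delta_g(\iota_{S^m}\circ f)&=d\iota_{S^m}(\tau_g(f))-(\tr_g f^*g_m)\,(\iota_{S^m}\circ f),\\
\Delta_g\iota_{S^n}&=d\iota_{S^n}(\tau_g(\mathrm{id}))-(\tr_g g_n)\,\iota_{S^n},
\end{align*}
where $\mathrm{id}\colon(S^n,g)\to(S^n,g_n)$. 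In each line the first term is tangent to the relevant sphere and the second points along the outward normal; using this splitting, and the orthogonality of the two Euclidean factors, $\Delta_gF=-nF$ becomes the four conditions (i) $\tau_g(f)=0$, (ii) $\tr_g f^*g_m=n$, (iii) $\tau_g(\mathrm{id})=0$, (iv) $\tr_g g_n=n$. In a $g_n$-orthonormal frame diagonalizing $f^*g_m$, with eigenvalues $\lambda_1^2,\dots,\lambda_n^2$ (so the $\lambda_j$ are the singular values of $(f_*)_x$), one has $\tr_g g_n=\sum_j(\cos^2\theta+\sin^2\theta\,\lambda_j^2)^{-1}$, so (iv) is exactly \eqref{sv}, and (i) is exactly the harmonicity of $f\colon(S^n,g)\to(S^m,g_m)$.

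The remaining step is to see that (ii) and (iii) are not additional constraints, but consequences of (i) and (iv). For (ii): taking the $g$-trace of $g=\cos^2\theta\,g_n+\sin^2\theta\,f^*g_m$ gives the pointwise identity $\cos^2\theta\,\tr_g g_n+\sin^2\theta\,\tr_g f^*g_m=n$, so (iv) forces (ii). For (iii): a short computation with the difference of the Levi-Civita connections of $g$ and $g_n$ shows that $\tau_g(\mathrm{id})=0$ if and only if $\operatorname{div}_g(f^*g_m)=\tfrac12\,d\,\tr_g(f^*g_m)$; but this says precisely that the stress--energy tensor $S_f=\tfrac12|df|_g^2\,g-f^*g_m$ is divergence-free, which is automatic once $f$ is harmonic (the conservation law for harmonic maps). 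Hence (i) alone implies (iii), and altogether $M_{f,\theta}$ is an LOS $\iff$ (i) and (iv) hold $\iff$ $f$ is harmonic and \eqref{sv} holds.

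I expect the main obstacle to be recognizing that (iii) is automatic: the trace identity behind (ii) is immediate, but if one fails to identify the ``identity-map'' condition (iii) with the stress--energy conservation law, one is left with a spurious extra equation and an apparently too-strong characterization. The rest --- carrying out the tension-field computations, keeping the tangential/normal splittings in the two Euclidean factors straight, and applying the unit-sphere minimality criterion $\Delta_gF=-nF$ with the correct dimension $n$ --- is routine.
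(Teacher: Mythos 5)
The paper itself does not prove this theorem; it imports it verbatim from \cite{x-y-z0}, so there is no in-house argument to compare against. Judged on its own, your proof is correct and complete modulo standard facts. The reduction of minimality in $S^{m+n+1}$ to $\Delta_g F=-nF$ (Takahashi), the componentwise use of the composition formula with $\nabla d\iota_{S^m}(X,Y)=-\langle X,Y\rangle\,\iota_{S^m}$, and the tangential/normal splitting yielding the four conditions (i)--(iv) are all sound, and you correctly identify that $\mathrm{tr}_g g_n=\sum_j(\cos^2\theta+\sin^2\theta\,\lambda_j^2)^{-1}$ turns (iv) into \eqref{sv}. The two non-obvious reductions are also right: taking the $g$-trace of $g=\cos^2\theta\,g_n+\sin^2\theta\,f^*g_m$ shows (iv)$\Rightarrow$(ii); and since $\tau_g(\mathrm{id}:(S^n,g)\to(S^n,g_n))=0$ is equivalent (after eliminating the divergence-free, constant-trace tensor $g$ from $g_n=\sec^2\theta\,(g-\sin^2\theta\,f^*g_m)$) to $\operatorname{div}_g(f^*g_m)=\tfrac12\,d\,\mathrm{tr}_g(f^*g_m)$, the Baird--Eells conservation law $\operatorname{div}_g S_f=-\langle\tau_g(f),df(\cdot)\rangle$ makes (iii) a consequence of (i). That last observation is exactly the step one could easily miss, and you flag and handle it correctly; this is also the same tension-field/Takahashi route one expects behind the cited characterization, rather than a genuinely different method.
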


        Since the second condition is in general hard to interpret,
        \cite{x-y-z0} restricts discussions to a special case.
   
   \begin{defi}   \label{LOMSE}
        If $f$ is an LOM and in addition, for each $x\in S^n$, all nonzero singular values of $(f_*)_x$ are equal, i.e.,
$$\{\lambda_1,\cdots,\lambda_n\}=\{0,\lambda\},$$
then it is called an {\bf LOMSE}.
     \end{defi}

Note that the distribution of singular values (counting multiplicities) has to be the same pointwise.
          Moreover, LOMSEs have a very pleasant structure decomposition.
          \begin{thm}[Structure of LOMSE \cite{x-y-z0}]\label{Str}
 Smooth $f$ between unit spheres is an LOMSE 
          with $\{\lambda_1,\cdots,\lambda_n\}=\{0,\lambda\}$ 
          if and only if
           $f=i\circ\pi$
           where $\pi$ is a Hopf fibration from $S^n$ to a (complex, quaternionic, or octonionic) projective space $(\mathbb P^p,h)$ of real dimension $p$ 
           and $i$ is a minimal isometric immersion: $(\mathbb P^p,\lambda^2h) 
           {\looparrowright} (S^m,g_m)$.
\end{thm}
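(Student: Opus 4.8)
The plan is to exhibit $f$, via its SE structure, as coming from a Riemannian submersion of the round sphere, and to pass harmonicity back and forth across the factorization using the composition law for harmonic morphisms.

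\emph{The ``if'' direction.} Suppose $f=i\circ\pi$ with $\pi\colon(S^n,g_n)\to(\mathbb P^p,h)$ a Hopf fibration (a Riemannian submersion with totally geodesic fibres, $\mathbb P^p\in\{\mathbb{CP}^{\,\cdot},\mathbb{HP}^{\,\cdot},\mathbb{OP}^1\}$) and $i\colon(\mathbb P^p,\lambda^2 h)\looparrowright(S^m,g_m)$ a minimal isometric immersion. Since $d\pi$ kills the $(n-p)$-dimensional vertical space and is a $g_n$-to-$h$ isometry on the horizontal space, while $\langle di(u),di(v)\rangle_{g_m}=\lambda^2 h(u,v)$, the singular values of $(df)_x=(di)\circ(d\pi)$ are $\{\lambda_1,\dots,\lambda_n\}=\{0^{(n-p)},\lambda^{(p)}\}$ at every point; in particular they are of SE type with constant $\lambda$. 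Substituting into \eqref{sv}, the relation $\tfrac{n-p}{\cos^2\theta}+\tfrac{p}{\cos^2\theta+\sin^2\theta\lambda^2}=n$ has an acute solution $\theta=\theta(n,p,\lambda)$ (here one uses that the existence of a minimal isometric immersion $i$ forces $\lambda$ into the admissible range, via Takahashi's eigenvalue constraint). For harmonicity: $\pi$ remains horizontally conformal for the induced metric $g=\cos^2\theta\,g_n+\sin^2\theta\,f^*g_m$ because the vertical/horizontal splitting is unchanged and $g|_{\mathcal H}$ is a constant multiple of $\pi^*h$; moreover the Hopf fibres, being totally geodesic for $g_n$, remain totally geodesic for the canonical variation $g$ by O'Neill's formulas, so $\pi\colon(S^n,g)\to(\mathbb P^p,\cdot)$ is a harmonic morphism with constant dilation. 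As $i$ is minimal isometric, hence harmonic, the composition law for harmonic morphisms gives that $f=i\circ\pi$ is harmonic for $g$, and by the Characterization of LOM, $M_{f,\theta}$ is an LOS; thus $f$ is an LOMSE with common nonzero singular value $\lambda$.

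\emph{The ``only if'' direction.} Conversely, let $f$ be an LOMSE with $\{\lambda_j\}=\{0,\lambda\}$. At each $x$, with $p(x)=\operatorname{rank}(df)_x$, identity \eqref{sv} reads $\tfrac{n-p(x)}{\cos^2\theta}+\tfrac{p(x)}{\cos^2\theta+\sin^2\theta\lambda(x)^2}=n$, which determines $\lambda(x)^2$ as a function of the integer $p(x)$ alone; since the singular values vary continuously and $p(x)$ is integer-valued, a standard connectedness argument forces $p(x)\equiv p$ and $\lambda(x)\equiv\lambda$ to be constant (this is the remark that the singular value distribution is pointwise constant). Hence $df$ has constant rank $p$, so the connected fibres of $f$ are the leaves of $\mathcal V=\ker df$ and foliate $S^n$; and since $|df(v)|_{g_m}^2=\lambda^2|v|_{g_n}^2$ on $\mathcal H=\mathcal V^{\perp_{g_n}}$, the map $f$, viewed onto its compact image $N=f(S^n)\subset S^m$ equipped with the rescaled induced metric $\lambda^{-2}g_N$, is a Riemannian submersion of the round sphere $(S^n,g_n)$. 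By the classification of Riemannian submersions (metric foliations) of round spheres (Escobales, Ranjan; Gromoll--Grove), $f$ is, up to isometry, a Hopf fibration $\pi\colon S^n\to\mathbb P^p$ with $(N,\lambda^{-2}g_N)\cong(\mathbb P^p,h)$, i.e.\ $(N,g_N)\cong(\mathbb P^p,\lambda^2 h)$; thus $f=i\circ\pi$ with $i\colon N\hookrightarrow S^m$ an isometric immersion of $(\mathbb P^p,\lambda^2 h)$ (an immersion since $d\bar f$ inherits rank $p$). Finally $\pi\colon(S^n,g)\to(\mathbb P^p,\cdot)$ is again a harmonic morphism with constant dilation by the same argument as above, and $f$ is harmonic for $g$ by the Characterization of LOM, so the composition law forces $\tau(i)=0$; being isometric, $i$ is a minimal immersion. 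This proves the stated decomposition.

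\emph{Where the difficulty lies.} The substantive input is the classification of Riemannian submersions of round spheres: this is precisely what eliminates every fibration other than the three Hopf families and identifies the base as a projective space, and it is not something one proves from scratch. The second, more technical point is the transfer of harmonicity across $g_n\rightsquigarrow g$: one must verify that the Hopf fibres remain minimal (indeed totally geodesic) under the anisotropic but fibre-wise constant rescaling $g=\cos^2\theta\,g_n+\sin^2\theta\,f^*g_m$ --- this is exactly the canonical variation of the Hopf submersion and is handled by O'Neill's tensor formulas --- so that $\pi$ stays a harmonic morphism and the composition law applies in both directions. The remaining steps (the rank/$\lambda$ constancy from \eqref{sv}, and checking \eqref{sv} has an acute solution consistent with Takahashi's constraint on minimal isometric immersions into spheres) are routine.
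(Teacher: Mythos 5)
This theorem is quoted in the paper from \cite{x-y-z0} without a proof, so your argument can only be judged on its own terms. Your ``if'' direction is essentially sound: the singular values of $di\circ d\pi$ are $\{0,\lambda\}$; the induced metric $g=\cos^2\theta\,g_n+\sin^2\theta\,f^*g_m$ is (up to a constant factor) a canonical variation of the Hopf submersion, so the fibres stay totally geodesic, $\pi$ is a harmonic morphism with constant dilation, and the composition law gives harmonicity of $f$ with respect to $g$. One point you should make explicit: the existence of an acute $\theta$ solving \eqref{sv} is not automatic but requires $p\lambda^2>n$; this does follow from Takahashi (which forces $p\lambda^2=k(k+n-1)$ for some integer $k$) together with the evenness of $k$ coming from the antipodal invariance along Hopf fibres, but as written it is only gestured at.

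The genuine gap is in the ``only if'' direction, at the factorization step. You equip the image $N=f(S^n)$ with the induced metric and declare $f$ a Riemannian submersion onto $(N,\lambda^{-2}g_N)$. But the theorem only asserts that $i$ is an immersion ($\looparrowright$), and at this stage you cannot exclude that $i$ fails to be injective; near a self-intersection the image is not a manifold, so $(N,\lambda^{-2}g_N)$ is not a Riemannian manifold, and in any case the preimages $f^{-1}(y)$ may be disconnected, violating the connected-fibre hypothesis of the Gromoll--Grove classification you invoke. The correct base is the leaf space of the foliation by connected components of the fibres (equivalently of $\ker df$), and showing that this leaf space is a smooth manifold carrying a metric for which the projection is a Riemannian submersion is a real step, not bookkeeping: a Riemannian foliation of the round sphere with compact leaves in general has only an orbifold leaf space (weighted Hopf circle foliations of $S^3$ are isometric-action examples), and one must use the constant rank and the $\lambda$-homothety of $df$ on the horizontal space to kill leaf holonomy (a nontrivial holonomy rotation fixed by the locally factored map would force the rank of $df$ to drop along exceptional leaves, contradicting constancy). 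Alternatively, one can first prove that the fibres are totally geodesic great subspheres and then quote Escobales/Ranjan; if instead you rely on the general connected-fibre classification, Gromoll--Grove alone leaves open precisely the case of $7$-dimensional fibres in $S^{15}$ (the would-be $\mathbb{OP}^1$ case), so you need Wilking's completion. Once the factorization through the leaf space is justified, the remainder of your argument (constant dilation harmonic morphism, composition law forcing $\tau(i)=0$, hence $i$ a minimal isometric immersion of $(\mathbb P^p,\lambda^2h)$) goes through.
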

\begin{rem}\label{evenk}
As a map to Euclidean space, an LOMSE has $(m+1)$-components
and all of them are spherical harmonic polynomials of even degree $k$.
We call such LOMSE and associated LOC of $\mathbf{(n,p,k)}${\bf -type}.
Furthermore, \cite[eq. (2.26)]{x-y-z0} gives
\begin{equation}\label{la}
\lambda=\sqrt{\frac{k(k+n-1)}{p}}.
\end{equation}
\end{rem}

          Different solutions to the Dirichlet problem are obtained by considering 
\begin{equation}\label{Mrf}         
          M=M_{\rho,f}:=
               \left\{(rx,\rho(r)f(x)):x\in S^n, r\in 
               (0,\infty)
               \right\}
               \subset \mathbb R^{m+n+2}
\end{equation}
          for analytic solutions. 
          A characterization of $M$ being minimal is the following. 
                    \begin{thm}[Evolution Equation \cite{x-y-z0}]
           For LOMSE $f$, $M$ is a minimal graph
if and only if  
                    \begin{equation}\label{ODE1}
\frac{\rho_{rr}}{1+\rho_r^2}+\frac{(n-p)\rho_r}{r}+\frac{p(\frac{\rho_r}{r}-\frac{\la^2\rho}{r^2})}{1+\frac{\la^2\rho^2}{r^2}}=0.
\end{equation}
           \end{thm}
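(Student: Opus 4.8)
The plan is to compute the mean curvature vector of $M=M_{\rho,f}$ directly, using the structural description of LOMSEs; a quicker (but slightly less self-contained) variational shortcut is indicated alongside.

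\emph{Step 1 (setup, and a shortcut).} I would parametrize $M$ by $\Psi(r,x)=(rx,\rho(r)f(x))$, $(r,x)\in(0,\infty)\times S^n$. By Theorem \ref{Str}, $f=i\circ\pi$ with $\pi\colon S^n\to(\mathbb P^p,h)$ a Hopf fibration — a Riemannian submersion whose fibers are totally geodesic subspheres of $S^n$ — and $i\colon(\mathbb P^p,\la^2h)\looparrowright(S^m,g_m)$ a minimal isometric immersion. Thus at each $x\in S^n$ one may fix an orthonormal basis $e_1,\dots,e_p$ of the horizontal space and $e_{p+1},\dots,e_n$ of the vertical space of $\pi$, so that $\hat e_a:=\la^{-1}f_*e_a$ ($a\le p$) are orthonormal while $f_*e_\alpha=0$ ($\alpha>p$). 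A short computation shows $\Psi_r,\Psi_*e_1,\dots,\Psi_*e_n$ are mutually orthogonal with squared lengths $1+\rho_r^2$, $\ r^2+\la^2\rho^2$ (the first $p$), and $r^2$ (the last $n-p$); in particular
\[
\mathrm{Vol}\big(M\cap\{r\le R\}\big)=\mathrm{Vol}(S^n)\int_0^R(1+\rho_r^2)^{1/2}\,(r^2+\la^2\rho^2)^{p/2}\,r^{\,n-p}\,dr,
\]
whose Euler--Lagrange equation works out to be exactly \eqref{ODE1}. Granting, via the principle of symmetric criticality for the compact group $G=\{(A,B)\in O(n+1)\times O(m+1):Bf=fA\}$ (which leaves every $M_{\rho,f}$ invariant), that $M$ is minimal iff $\rho$ is critical for this functional, the theorem follows; below I instead compute the mean curvature directly.

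\emph{Step 2 (mean curvature vector).} The rank-$(m+1)$ normal bundle of $M$ is spanned at $\Psi(r,x)$ by $N_0=(-\rho_rx,\,f)$, by $N_a=(-\la\rho\,e_a,\,r\,\hat e_a)$ for $a\le p$, and by $(0,\nu_1),\dots,(0,\nu_{m-p})$, where $\nu_j$ span the normal space of $i(\mathbb P^p)$ in $S^m$. For each unit tangent vector $E$ of the frame of Step 1, I would extend $E$ along its integral curve $\gamma$ on $M$; since the ambient connection is flat, $\bar\nabla_EE=\ddot\gamma(0)$. The integral curve of $\Psi_r/\sqrt{1+\rho_r^2}$ keeps $x$ fixed and varies $r$; that of the normalization of $\Psi_*e_i$ keeps $r$ fixed and traces the $S^n$-geodesic through $x$ with initial velocity $e_i$. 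In every case $\ddot\gamma(0)$ is a tangential multiple of $E$ plus a second-order part, and only the latter enters the mean curvature. Now (i) $f$ is constant along the totally geodesic vertical fibers, so the $n-p$ vertical directions contribute only in the $\R^{n+1}$ slot, and (ii)
\[
\sum_{a=1}^p\frac{d^2}{dt^2}\Big|_{t=0}f\big(\exp_x(te_a)\big)=\Delta_{g_n}f(x)=-p\la^2f(x)
\]
— the middle equality since the vertical part of the full sum $\sum_{i=1}^n$ vanishes by (i), and the last since the components of $f$ are spherical harmonics of degree $k$ with $k(k+n-1)=p\la^2$ (Remark \ref{evenk}), equivalently by Takahashi's theorem applied to the minimal immersion $i$. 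Assembling these contributions yields, modulo $TM$,
\[
\sum_{I}\bar\nabla_{E_I}E_I\ \equiv\ -\Big(\tfrac{\rho_r\rho_{rr}}{(1+\rho_r^2)^2}+\tfrac{pr}{r^2+\la^2\rho^2}+\tfrac{n-p}{r}\Big)(x,0)+\Big(\tfrac{\rho_{rr}}{(1+\rho_r^2)^2}-\tfrac{p\la^2\rho}{r^2+\la^2\rho^2}\Big)(0,f).
\]
This vector lies in $\mathrm{span}\{(x,0),(0,f)\}$, which meets the normal bundle exactly in the line $\R N_0$ (as $(x,0)$ and $(0,f)$ are orthogonal to every $N_a$ and every $(0,\nu_j)$). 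Hence $M$ is minimal iff this vector is orthogonal to $TM$, i.e.\ iff its inner product with $N_0=(-\rho_rx,f)$ vanishes; that inner product equals
\[
\Big(\tfrac{\rho_r^2\rho_{rr}}{(1+\rho_r^2)^2}+\tfrac{pr\rho_r}{r^2+\la^2\rho^2}+\tfrac{(n-p)\rho_r}{r}\Big)+\Big(\tfrac{\rho_{rr}}{(1+\rho_r^2)^2}-\tfrac{p\la^2\rho}{r^2+\la^2\rho^2}\Big)=\frac{\rho_{rr}}{1+\rho_r^2}+\frac{(n-p)\rho_r}{r}+\frac{p\big(\frac{\rho_r}{r}-\frac{\la^2\rho}{r^2}\big)}{1+\frac{\la^2\rho^2}{r^2}},
\]
the left-hand side of \eqref{ODE1}. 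This will give the stated equivalence.

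\emph{The main obstacle} is the second-order bookkeeping behind the first display of Step 2: verifying that the vertical Hessian of $f$ vanishes (this is exactly where the totally geodesic fibers and the factorization $f=i\circ\pi$ are used), correctly evaluating $\Delta_{g_n}f=-p\la^2f$, and tracking which parts of the various $\ddot\gamma(0)$'s are tangential — hence discardable when projecting onto the normal bundle. The variational shortcut of Step 1 sidesteps this at the price of the symmetric-criticality verification: that $G$ is compact (clear, being a closed subgroup of $O(n+1)\times O(m+1)$) and that its $G$-invariant normal variations of $M$ are exhausted by the $M_{\rho+t\psi,f}$, which in turn reduces to fixed-point-freeness of the relevant isotropy representations.
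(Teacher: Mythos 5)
Your argument is correct, but there is nothing in the paper to compare it against directly: the Evolution Equation is imported from \cite{x-y-z0} and not proved here. The closest the paper comes is the combination of Proposition \ref{p1} and \S\ref{S4}, which show that the volume of $M_{\rho,f}$ equals the length of the curve $(r,\rho(r))$ in the conformal metric $u\,(dr^2+d\rho^2)$ with $u=(r^2+\la^2\rho^2)^p r^{2(n-p)}$, and that the geodesic equation for that metric reduces to \eqref{ODE1} --- this is precisely your ``variational shortcut'' of Step 1, and your Lagrangian $(1+\rho_r^2)^{1/2}(r^2+\la^2\rho^2)^{p/2}r^{n-p}$ matches the paper's volume density. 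What your Step 2 adds, and what the paper leaves to the citation, is the closing of the logical gap you correctly identify: criticality within the one-parameter family $M_{\rho+t\psi,f}$ only gives the ``only if'' direction unless one invokes symmetric criticality (which is delicate here, since a general minimal isometric immersion $i$ of $\mathbb P^p$ need not be equivariant under a large enough group). Your direct computation avoids this entirely by exhibiting the full mean curvature vector: the frame, the normal basis $N_0,N_a,(0,\nu_j)$, the vanishing of the vertical Hessian of $f$ along the totally geodesic fibers, the identity $\Delta_{S^n}f=-k(k+n-1)f=-p\la^2 f$ via \eqref{la}, and the final pairing with $N_0$ all check out, and the resulting scalar is exactly the left side of \eqref{ODE1}. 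So the mean curvature is a (nowhere-vanishing multiple of $N_0$ times) that scalar, which gives the stated equivalence in both directions. This is a sound, self-contained proof of a statement the paper treats as a black box.
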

           
           There are two types
           \begin{itemize}
           \item [\bf {\bf (I)}]
     $(n,p,k)=(3,2,2), (5,4,2), (5,4,4)$ or $n\geq 7$, and
          \item [\bf (II)] 
          $(n,p)=(3,2)$, $k\geq 4$ or $(n,p)=(5,4)$, $k\geq 6$;
          \end{itemize}
          for which solutions to \eqref{ODE1}
           emitting from the origin behave differently.


                              $$\begin{minipage}[c]{0.55\textwidth}
                              \includegraphics[scale=0.42]{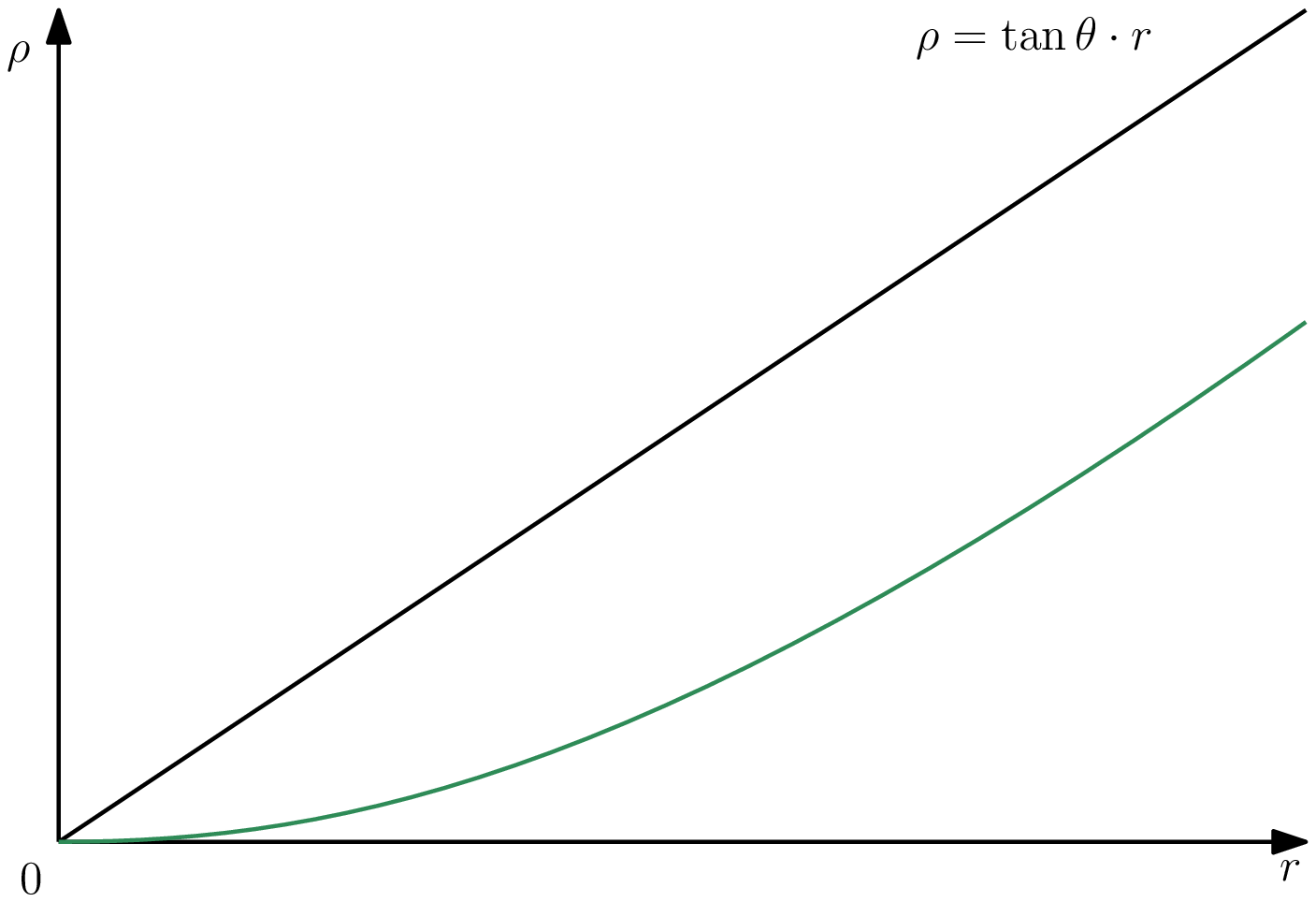}
                              \end{minipage}%
                          \begin{minipage}[c]{0.51\textwidth}
                           \includegraphics[scale=0.448]{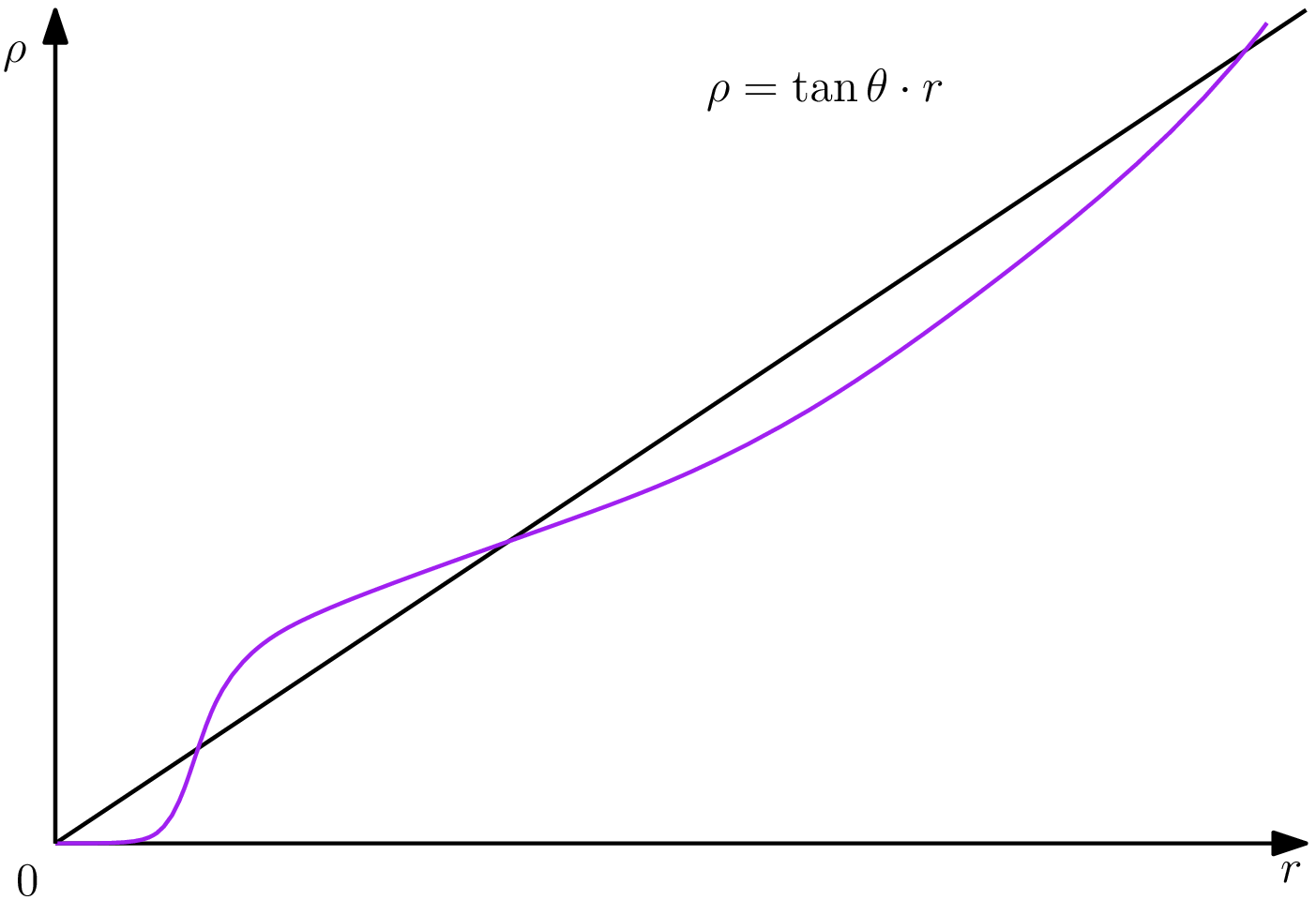}
                           \end{minipage}$$


        Here  the LOC
curve $\{\rho=\tan\theta\cdot r\}$ stands for the LOC.
          For Type {\bf (II)} LOMSE $f$,
          each intersection point of the oscillating curve with the LOC curve
          leads to a minimal graph over the unit ball $\mathbb B^{n+1}\subset \mathbb R^{n+1}$
          by rescaling the oscillating curve in different scales to connect the origin and $(1,\tan\theta)$ as follows
          \begin{figure}[h]
          \includegraphics[scale=0.48]{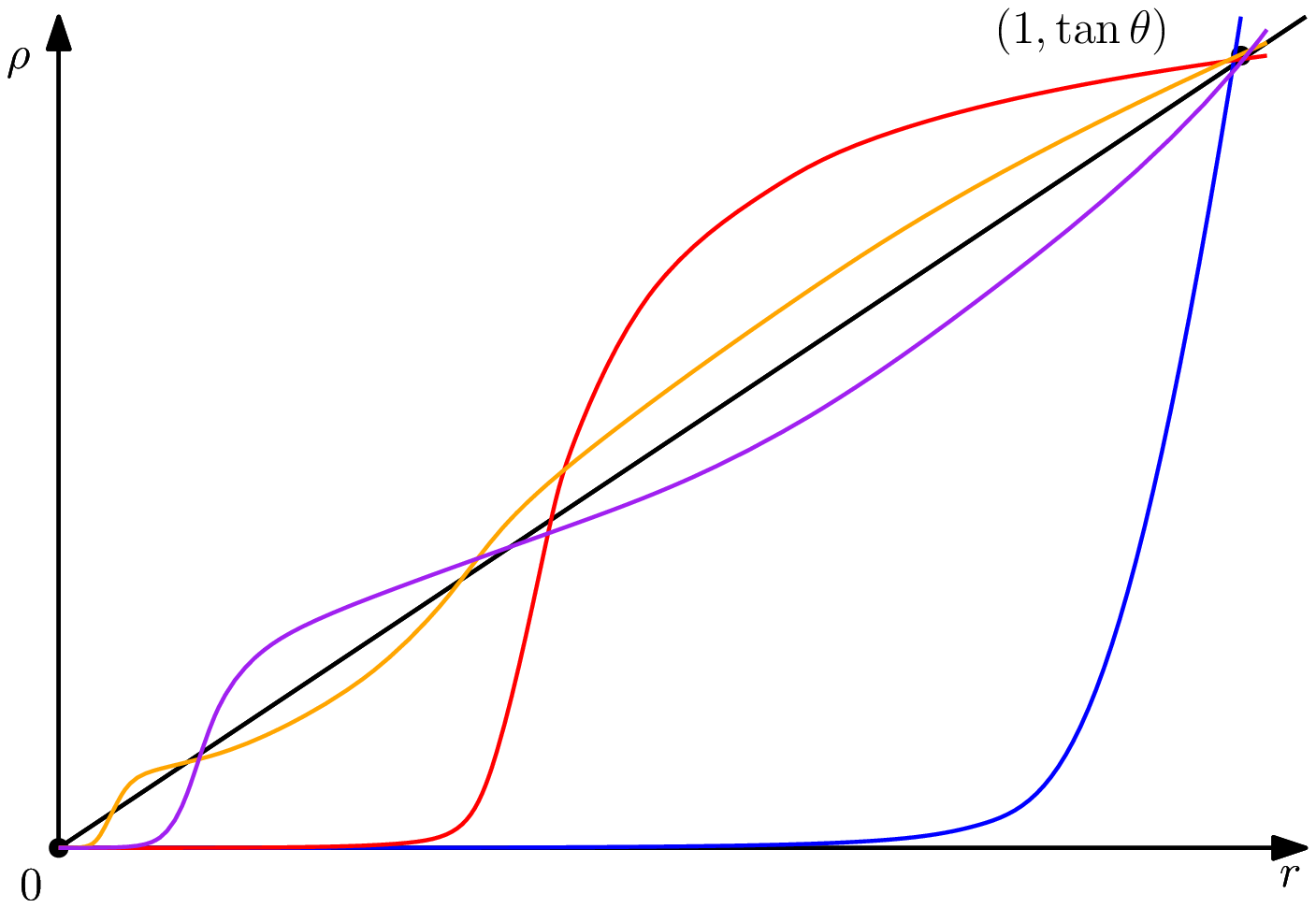}
          \caption{Curves representing analytic solutions}
          \label{AS}
         \end{figure}
         {\ }\\
         and the boundary of each minimal graph is exactly the graph of $\phi=\tan\theta\cdot f$.


\Section{Canonical metric on the quotient space}{Canonical metric on quotient space}\label{S3}%

According to \eqref{Mrf}, we restrict ourselves to the following $(n+2)$-dimensional smooth submanifold  associated to LOMSEs for both Type {\bf (I)} and Type {\bf (II)} 
               \begin{equation}\label{W}
               W:=\{(rx,\rho f(x)):\ x\in S^n,\ r\in \mathbb R_+,\ \rho\in \mathbb R\}
               \end{equation}
               in $\mathbb R^{m+n+2}$. 
               
               Every smooth function $\rho(r)$
               (or more generally, a parametric curve with nowhere vanishing velocity in the $r\rho$-plane)
               would give an embedded (or immersed) hypersurface in $W$.
               Stimulated by the ideas of  \cite{b, h-l, l} for submanifolds of low cohomogeneity,
               we consider the right half $r\rho$-plane as the ``quotient space" of $W$.
               Namely, the quotient space only reads off values $(r,\rho)$ for points in $W$.
         \begin{pro}\label{p1}
         With respect to canonical metric
               \begin{equation}\label{g0}
               g_0:=\sigma_0^2\cdot g=\sigma_0^2\cdot 
               \left[
               \left(
               r^2+\lambda^2\rho^2
               \right)^p
               \cdot
               r^{2(n-p)}
               \right]
               \cdot [dr^2+d\rho^2]
               \end{equation}
               where $\sigma_0$ is the volume of the $n$-dimensional unit sphere,
               the length of every curve
                 equals the volume of corresponding hypersurface in $W\subset \mathbb R^{m+n+2}$.
                        \end{pro}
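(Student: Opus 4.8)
The plan is to compute the induced Riemannian volume of the hypersurface cut out by a curve and to match it, integrand by integrand, with the $g_0$-length of that curve. Fix a curve $\gamma(t)=(r(t),\rho(t))$ in the right half $r\rho$-plane with nowhere vanishing velocity. By \eqref{W} the associated hypersurface in $W$ is
\[
H_\gamma=\left\{\big(r(t)\,x,\ \rho(t)\,f(x)\big):\ x\in S^n,\ t\in I\right\}\subset\mathbb R^{m+n+2},
\]
which we parametrize by $(t,x)$ via $F(t,x)=(r(t)x,\rho(t)f(x))$. Since the two Euclidean factors $\mathbb R^{n+1}$ and $\mathbb R^{m+1}$ are mutually orthogonal, since $|x|=|f(x)|=1$, and since $\langle x,v\rangle=0=\langle f(x),f_*v\rangle$ for every $v\in T_xS^n$ (the latter because $f$ lands in $S^m$), the pullback metric on $H_\gamma$ splits as an orthogonal sum of the $\partial_t$-direction and $T_xS^n$: one gets $\langle \partial_tF,\partial_tF\rangle=\dot r^2+\dot\rho^2$, $\langle \partial_tF,F_*v\rangle=0$, and $\langle F_*v,F_*w\rangle=r^2\,g_n(v,w)+\rho^2\,(f^*g_m)(v,w)$.

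The next step is to diagonalize $f^*g_m$ against $g_n$ using the LOMSE structure. By the Structure Theorem \ref{Str}, $f=i\circ\pi$ with $\pi:S^n\to(\mathbb P^p,h)$ a Riemannian submersion (a Hopf fibration, whose fibers have dimension $n-p$) and $i:(\mathbb P^p,\lambda^2h)\looparrowright(S^m,g_m)$ an isometric immersion; equivalently, the nonzero singular values of $(f_*)_x$ all equal $\lambda$ with multiplicity $p$. Hence, on each $T_xS^n$, $f^*g_m$ vanishes on the $(n-p)$-dimensional vertical (fiber) subspace $\mathcal V_x=\ker(f_*)_x$ and equals $\lambda^2g_n$ on the $p$-dimensional horizontal subspace $\mathcal H_x$. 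Choosing at $x$ a $g_n$-orthonormal frame $e_1,\dots,e_p$ of $\mathcal H_x$ and $e_{p+1},\dots,e_n$ of $\mathcal V_x$, the induced metric on $H_\gamma$ is block diagonal in the frame $(\partial_t,e_1,\dots,e_n)$ with diagonal entries $\dot r^2+\dot\rho^2$, then $r^2+\lambda^2\rho^2$ repeated $p$ times, then $r^2$ repeated $n-p$ times. Taking the determinant and the square root (recall $r>0$),
\[
dV_{H_\gamma}=\sqrt{\dot r^2+\dot\rho^2}\,\big(r^2+\lambda^2\rho^2\big)^{p/2}\,r^{\,n-p}\ dt\wedge dV_{S^n},
\]
and integrating over $x\in S^n$ contributes the factor $\sigma_0=\V(S^n)$, so that $\V(H_\gamma)=\int_I \sigma_0\,(r^2+\lambda^2\rho^2)^{p/2}\,r^{\,n-p}\,\sqrt{\dot r^2+\dot\rho^2}\;dt$.

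On the other side, by \eqref{g0} we have $\sqrt{g_0(\dot\gamma,\dot\gamma)}=\sigma_0\,(r^2+\lambda^2\rho^2)^{p/2}\,r^{\,n-p}\,\sqrt{\dot r^2+\dot\rho^2}$, which is precisely the integrand just obtained for $\V(H_\gamma)$; integrating over $t$ yields $\mathrm{length}_{g_0}(\gamma)=\V(H_\gamma)$, which is the assertion. The argument is purely local in $t$, so the immersed case (self-intersecting $\gamma$) needs no modification, both sides being then counted with multiplicity. I do not expect a genuine obstacle here: the only point that must be handled with care is the simultaneous diagonalization of $f^*g_m$ and $g_n$ with the stated eigenvalues and multiplicities, and this is immediate from Theorem \ref{Str} (or from the defining property of an LOMSE); everything else is bookkeeping of a block-diagonal determinant.
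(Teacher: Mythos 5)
Your proof is correct and follows essentially the same route as the paper: an orthonormal frame of $T_xS^n$ adapted to the singular values of $(f_*)_x$ makes the induced metric block-diagonal, yielding the volume factor $\sigma_0\,(r^2+\lambda^2\rho^2)^{p/2}r^{\,n-p}$. The only difference is that you verify explicitly the orthogonality of the $\partial_t$-direction to the spherical directions and integrate the full $(n+1)$-dimensional volume form, a step the paper delegates to the analogy with the cohomogeneity-one case.
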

\begin{proof}        
                         Note that this situation is similar to the cohomogeneity one case.
                         We only need to figure out the volume function $V(r,\rho)$
                         of the $n$-dimensional submanifold corresponding to $(r,\rho)$.
                         Then 
                         $$g_0=\big(V(r,\rho)\big)^2\cdot \left[dr^2+d\rho^2\right]$$ 
                         will have the desired property.
                         
                         For expression of $V(r,\rho)$,
                         let us fix a point $x\in S^n$ and
                        choose an orthonormal frame 
                        $\{e_1,\cdots, e_p,\ e_{p+1},\cdots, e_n\}$
                         of $T_xS^n$
                         such that
                         $\{f_*(e_1),\cdots,\ f_*(e_p)\}$ form an orthogonal set in $T_{f(x)}S^m$ 
                         of length $\lambda$
                         and $f_*(e_{p+1})=\cdots=f_*(e_n)=0$, 
                         by Definition \ref{LOMSE}. 
                         Set $f_i=\frac{f_*(e_i)}{\lambda}$ 
                         for $i=1,\cdots, p$.
                   Hence, the differential $DF_x$ for $F(x)= (rx,\rho f(x))$ sends
                         \begin{eqnarray*}
                         e_i&\longmapsto&(re_i,\ \lambda\rho f_i),\quad 1\leq i\leq p\\
                         e_{j}&\longmapsto&(re_{j},\ 0),\quad p+1\leq j\leq n.
                         \end{eqnarray*}
                         Thus, 
                          \begin{eqnarray*}
                         V(r,\rho)&=&\int_{F(S^n)}dv_{F(S^n)}
                         =\int_{S^n}\sqrt{\det(DF^T\cdot DF)}dv_{S^n}\\
                         &=&\int_{S^n}(\sqrt{r^2+\lambda^2\rho^2})^p\cdot r^{(n-p)}dv_{S^n}\\
                         &=&\sigma_0({r^2+\lambda^2\rho^2})^{\frac{p}{2}}\cdot r^{(n-p)},
                          \end{eqnarray*}
           and the proposition gets proved.
\end{proof}

\begin{rem}
              All solution curves exhibited in Figure \ref{AS} are geodesics.
                In particular, 
                $\rho=\tan\theta\cdot r$ is a geodesic with respect to $g$.
\end{rem}

{\ }

\section{Geodesic equation and the minimal surface equation}\label{S4}

We show in this section that the geodesic equation with respect to the metric 
$$
g = \left[
               \left(
               r^2+\lambda^2\rho^2
               \right)^p
               \cdot
               r^{2(n-p)}
               \right]
               \cdot [dr^2+d\rho^2]
               =u
               \cdot [dr^2+d\rho^2]
$$
gives rise to the Evolution Equation \eqref{ODE1}. 

The formula for the Christoffel symbol of a conformal metric is 
$$
\tilde \Gamma_{ij}^k = \Gamma_{ij}^k + \frac{1}{2}\Big((\log u)_i \delta^{k}_j + (\log u)_j \delta^k_i - (\log u)_k \delta_{ij}\Big).
$$
Direct calculation gives 
\begin{align*}
A &:= \frac{1}{2}(\log u)_r = \frac{(n-p)\lambda^2\rho^2 + nr^2}{(\lambda^2\rho^2 + r^2)r} = \Gamma_{rr}^r = \Gamma_{r\rho}^\rho = -\Gamma_{\rho \rho}^r,\\
B &:= \frac{1}{2}(\log u)_\rho =  \frac{p\lambda^2 \rho}{\lambda^2\rho^2 + r^2} = -\Gamma_{rr}^\rho = \Gamma_{r\rho}^r = \Gamma_{\rho\rho}^\rho.
\end{align*}
Let the arc-length parameter be $s$, and assume $r=r(s)$, $\rho = \rho(s)$ to be a geodesic. 
Denote $\dot r = \dfrac{dr}{ds}$, $\dot \rho = \dfrac{d\rho}{ds}$. The geodesic equation is then 
\begin{gather*}
\ddot r + A\dot r^2 + 2B\dot r\dot \rho - A\dot \rho^2 = 0,\\
\ddot \rho - B \dot r^2 + 2A\dot r\dot \rho + B\dot \rho^2=0.
\end{gather*}

Since
$
\rho_r = \dfrac{\dot \rho}{\dot r}, 
$
it follows that 
\begin{align*}
\rho_{rr} &= \frac{\ddot \rho \dot r - \dot \rho \ddot r}{\dot r^3}\\
&= B - A\rho_r + B\rho_r^2 - A\rho_r^3\\
&= (1+\rho_r^2) (B- A\rho_r)
\end{align*}
which can be verified to be \eqref{ODE1}. 

Furthermore, we remark that a rescaling of a geodesic is again geodesic.

{\ }

\Section{Computations on stability and instability}{Computations on stability and instability}\label{S5}
             
             In \cite{x-y-z} it has been shown that every LOC of $(n,p,2)$-type (subset of Type {\bf (I)}) is area-minimizing, thus stable minimal;
             whereas LOCs for Type  {\bf (II)}  LOMSEs are not area-minimizing. 
             The lengths of the solution geodesics (with more and more oscillations) in Figure \ref{AS} increase  to that of the LOC segment
             connecting the origin and $(1,\tan\theta)$ (open at the origin and closed at the other side).
             However, these geodesics are isolated, not forming a continuous family of geodesics.
             So it was unclear if an LOC segment for Type {\bf (II)} is stable or not.
             In this section, we determine stability and instability of these LOC segments
              in the quotient spaces.
             
  \begin{thm}\label{t3}
  The LOC segment for an LOMSE is stable if and only if  it is of Type {\bf (I)} and unstable if and only if it is of Type {\bf (II)}.
  \end{thm}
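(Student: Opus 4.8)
The plan is to analyze stability by looking at the second variation of length (i.e. the Jacobi equation) along the LOC curve $\rho = \tan\theta\cdot r$ in the quotient space equipped with the metric $g$ from \S\ref{S4}, or equivalently by studying the linearization of the geodesic ODE \eqref{ODE1} at the fixed ray. Since $g$ is conformal on the right half $r\rho$-plane, a clean reduction is to pass to coordinates adapted to the conical structure. Writing $r = e^t\cos\psi$, $\rho = e^t\sin\psi$ (polar-type coordinates centered at the origin, which is the natural vertex of the cone), the LOC curve becomes the vertical ray $\psi \equiv \theta$, and the conformal factor $u = (r^2+\lambda^2\rho^2)^p r^{2(n-p)}$ becomes $e^{2nt}$ times a function of $\psi$ alone. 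Thus $g = e^{2nt}\,h(\psi)\,(dt^2 + d\psi^2)$ for an explicit positive function $h(\psi) = (\cos^2\psi + \lambda^2\sin^2\psi)^p(\cos\psi)^{2(n-p)}$, a warped structure in which the LOC ray is a geodesic precisely when $\psi = \theta$ is a critical point of the appropriate potential; this should match condition \eqref{sv}.

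Next I would derive the Jacobi operator for normal variations of this ray. For a metric of the form $e^{2nt}h(\psi)(dt^2+d\psi^2)$, a geodesic along $\psi = \theta$ admits a Jacobi field $J(t)$ in the $\psi$-direction satisfying a second-order linear ODE whose coefficients are constant (after reparametrizing by arc length $s$, along the ray $ds = e^{nt}\sqrt{h(\theta)}\,dt$, so $t$ and $s$ differ by an exponential change of variable). The upshot is a constant-coefficient equation $J'' + c\,J' + q\,J = 0$ in the variable $t$, where $q$ is built from $h''(\theta)/h(\theta)$, $n$, and the curvature terms; this is exactly the kind of computation \S\ref{S6} promises to translate into Jacobi fields. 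Stability (no conjugate points along the open segment, and nonnegativity of the index form with the fixed/free endpoint conditions dictated by the Dirichlet problem) is then governed by the sign of the discriminant of this quadratic: if the characteristic roots are real, solutions are monotone-ish and one can produce a positive Jacobi field, giving stability; if the roots are complex, solutions oscillate, forcing conjugate points and hence a negative direction for the index form, giving instability. The dichotomy real-versus-complex roots is precisely the dichotomy between a node and a spiral for the fixed point of \eqref{ODE2}, and by the already-exhibited phase portraits this node/spiral alternative is exactly the Type {\bf (I)}/Type {\bf (II)} split in Remark \ref{evenk}'s classification — so I would verify that the discriminant condition, written out in terms of $(n,p,k)$ via \eqref{la}, reproduces the list $(3,2,2),(5,4,2),(5,4,4), n\geq 7$ on one side and $(3,2,k\geq4),(5,4,k\geq6)$ on the other.

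Concretely, the steps in order: (1) change to conical coordinates and write $g$ as a warped product, identifying $h(\psi)$ and confirming $\psi=\theta$ is the geodesic ray; (2) compute the second variation / index form of length for the LOC segment with the correct boundary conditions (free at the origin-vertex since that endpoint is approached as $t\to-\infty$, fixed at the outer endpoint $(1,\tan\theta)$), reducing it to a Sturm–Liouville form; (3) solve the constant-coefficient Jacobi ODE and read off stability from the sign of the discriminant $\Delta$; (4) express $\Delta$ in closed form using $\lambda^2 = k(k+n-1)/p$ and check it is $\geq 0$ exactly for Type {\bf (I)} and $<0$ exactly for Type {\bf (II)}, which also gives Theorems \ref{t1} and \ref{t2} as corollaries. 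The main obstacle I anticipate is step (2): getting the boundary term at the cone vertex right. Because the metric degenerates/blows up there and the LOC segment is open at the origin, one must argue that the index form is still well-defined and that the relevant test functions (the oscillating Jacobi fields in the Type {\bf (II)} case) are admissible competitors with finite energy — in other words, that the instability detected by a complex-root Jacobi field near the vertex genuinely descends to instability of the minimal cone and is not an artifact of the singularity. Handling this carefully, likely by truncating near the vertex and estimating the boundary contribution, is where the real work lies; the algebra in steps (1), (3), (4) is routine once the framework is set up.
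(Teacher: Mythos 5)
Your proposal is essentially the paper's own argument: linearize along the LOC ray to get a Jacobi equation that becomes a constant-coefficient (Euler) ODE in $\log r$, decide stability by whether the characteristic roots are real or complex, and match the resulting discriminant inequality, via $\lambda^2=k(k+n-1)/p$ from \eqref{la}, against the Type \textbf{(I)}/\textbf{(II)} lists; the paper reaches the same Euler equation by computing the Gaussian curvature $K(s)=a/s^2$ directly in $(r,\rho)$ coordinates rather than passing to conical coordinates (where, incidentally, the warping factor is $e^{2(n+1)t}$, not $e^{2nt}$). The boundary issue at the vertex that you flag as the main difficulty in fact evaporates: in the oscillatory case the Jacobi fields $\sqrt{s}\cos\bigl(\tfrac{\sqrt{-\triangle}}{2}\log s\bigr)$ have infinitely many zeros in the interior of the segment, so one gets compactly supported negative directions for the index form without touching the singular endpoint.
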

   \begin{rem}
   Theorems \ref{t1} and \ref{t2} are corollaries.
   \end{rem}
             
             \begin{proof} 
             The strategy is to study the Jacobi field equation. 
             We shall show that an LOC segment $\{ (r,\tan\theta\cdot r): r\in(0,1] \}$ contains conjugate points to the point $(1, \tan\theta)$ for Type {\bf (II)} and no conjugate points for Type {\bf (I)}. 
             
             Let us first consider the Gaussian curvature $K$ for metric $g$ in Proposition \ref{p1} (ignoring the factor $\sigma_0$). 
             A well-known formula for Gaussian curvature 
             for isothermal metric $h(dr^2+d\rho^2)$ is
             $$
             K=-\dfrac{1}{2h}\triangle \log h.
             $$
             By $h= \left(
               r^2+\lambda^2\rho^2
               \right)^p
               \cdot
               r^{2(n-p)}
              $,
              it becomes
        \begin{eqnarray*}
             K&=&-\dfrac{1}{2(r^2+\lambda^2\rho^2)^p\cdot r^{2(n-p)}}\triangle
             \left(
             p\log(r^2+\lambda^2\rho^2)+2(n-p)\log r
             \right)\\
             &=&\dfrac{1}{(r^2+\lambda^2\rho^2)^p\cdot r^{2(n-p)}}
             \left[
             \dfrac{n-p}{r^2}-p(\lambda^2-1)\dfrac{r^2-\lambda^2\rho^2}{(r^2+\lambda^2\rho^2)^2}
             \right].
       \end{eqnarray*}
             Along the geodesic $\{\rho=\tan\theta\cdot r\}$,
             \begin{equation}\label{sK}
             K(r)=\dfrac{1}{(1+\lambda^2\tan^2\theta)^p\cdot r^{2(n+1)}}
             \left[
             (n-p)-
             \dfrac{p(\lambda^2-1)}{(1+\lambda^2\tan^2\theta)^2}\left(1-\lambda^2\tan^2\theta\right)
             \right].
             \end{equation}
             To simplify the above expression, 
             we transform \eqref{sv} for LOMSE
             \begin{equation}\label{lpn}
            \dfrac{n-p}{\cos^2\theta}+\dfrac{p}{\cos^2\theta+\lambda^2\sin^2\theta}=n
             \end{equation}
             (by splitting $n=(n-p)+p$) to
             $$
             \dfrac{(n-p)\sin^2\theta}{\cos^2\theta}+
             \dfrac{p(\sin^2\theta-\lambda^2\sin^2\theta)}{\cos^2\theta+\lambda^2\sin^2\theta}=0,
             $$
             which implies
             \begin{equation}\label{tlpn}
(n-p)=\dfrac{p(\lambda^2-1)}{1+\lambda^2\tan^2\theta}.
             \end{equation}
             Now \eqref{sK} gives
             \begin{equation}\label{sK2}
             \begin{split}
             K(r)&=\dfrac{n-p}{(1+\lambda^2\tan^2\theta)^p\cdot r^{2(n+1)}}
             \left[
             1-\frac{1-\lambda^2\tan^2\theta}{1+\lambda^2\tan^2\theta}
              \right]\\
              &= \dfrac{2(n-p)}{(1+\lambda^2\tan^2\theta)^p\cdot r^{2(n+1)}}
             \left[
             1-\frac{1}{1+\lambda^2\tan^2\theta}
              \right].
              \end{split}
             \end{equation}
             
             For solutions to the Jacobi equation, we need the arc-length parameter 
                   \begin{eqnarray}
s&=&\int_{0}^r (1+\lambda^2\tan^2\theta)^{\frac{p}{2}}r^{n}\cdot \sqrt{1+\tan^2\theta}\ dr\nonumber\\
&=& \dfrac{\sqrt{1+\tan^2\theta} \ (1+\lambda^2\tan^2\theta)^{\frac{p}{2}}}{n+1}\cdot r^{n+1}\label{our t}. 
                   \end{eqnarray}
                   Therefore
                  \begin{eqnarray*}
                   K(s)&=&
                     \dfrac{2(n-p)(1+\tan^2\theta)}{(n+1)^2s^2}
                   \left [
                                   1-\frac{1}{1+\lambda^2\tan^2\theta}    
                     \right].
                     \end{eqnarray*}
                     Note that \eqref{tlpn} implies
                   \begin{align*}
                                   1-\frac{1}{1+\lambda^2\tan^2\theta} &= \frac{p\lambda^2 -n}{p(\lambda^2-1)} \\
\text{and}\quad\quad                    1+\tan^2\theta &= \frac{n(\lambda^2 -1)}{\lambda^2(n-p)}.
                       \end{align*}
                  So      
                  $$   
                   K(s) = \frac{2(p\lambda^2-n)n}{p\lambda^2(n+1)^2s^2}=\dfrac{2(k^2+kn-k-n)n}{(k+n-1)k(n+1)^2}\cdot \dfrac{1}{s^2},
                  $$
                      according to \eqref{la}.           
             
                  Define 
                  \begin{equation}\label{def a}
                  a:=\dfrac{2(k^2+kn-k-n)n}{(k+n-1)k(n+1)^2}.
                  \end{equation}
                  Fix a unit normal vector field $N$ along the LOC segment.
                   Then $J(s)N$ is a Jacobi vector field if and only if
                   $$J''(s)+K(s)J(s)=0.$$
                   This is a Euler equation. 
                   Consider $J(s)=s^l$. Then
                   $$
                   l(l-1)+a=0,
                   $$
                   and the solutions are
                   \begin{equation}\label{sJ}
                   J(s)=
                   C_1e^{\frac{1+\sqrt\triangle}{2}\log s}+C_2e^{\frac{1-\sqrt\triangle}{2}\log s}.
                   \end{equation}

                   {\bf (A).} When $\triangle=1-4a>0$, 
                   all solutions
                   $J(s)=
                   C_1s^{\frac{1+\sqrt\triangle}{2}}+C_2s^{\frac{1-\sqrt\triangle}{2}}$
                   are linear combinations of two power functions whose exponents are distinct real numbers.
                   Apparently for $J$ to vanish at two distinct $s_1, s_2>0$, we must have $C_1=C_2=0$. 
                   Hence the index of the LOC segment is zero and the segment is stable.
                   Similarly, when $\Delta = 0$, then $J(s) = C_1 \sqrt{s} + C_2 \sqrt{s}\log s$ can not vanish at two distinct $s_1, s_2$ unless it is trivial. 
                    
                    {\bf (B).} When $\triangle=1-4a<0$,  all solutions are
%
{
                   $$J(s)=
                   C_1\sqrt s \cos
                   \left(  
                   \dfrac{\sqrt {-\triangle}}{2}\log s
                   \right)
                   +
                   C_2
                   \sqrt s
                   \sin
                   \left(  
                   \dfrac{\sqrt {-\triangle}}{2}\log s
                   \right).
                   $$
                 It is clear that there are infinitely many
                   conjugate points to any point 
                   on the LOC segment. 
                   Therefore, the LOC segment is unstable.
                   }
                   
                   Now let us consider {\bf (A)} first by solving
                    $$a=\dfrac{2(k^2+kn-k-n)n}{(k+n-1)k(n+1)^2}<\dfrac{1}{4}$$
                   which is equivalent to
                                      \begin{eqnarray*}
8n(k^2+kn-k-n)&<& (k+n-1)k(1+n)^2\\
\Longleftrightarrow\ \  8nk^2+8n(n-1)k -8n^2&<& k^2(1+n)^2+k(n-1)(n+1)^2
                   \end{eqnarray*}
and further transformed to
                  \begin{eqnarray}
                  ((1+n)^2-8n)k^2+k(n-1)[(n+1)^2-8n]+8n^2&>&0 \nonumber\\
\Longleftrightarrow\quad                
 (n^2-6n+1)k^2+k(n-1)(n^2-6n+1)+8n^2&>&0 \label{eqnk}
                   \end{eqnarray}
                   If $n\geq6$, then the leading coefficient of $k^2$ is positive and 
                   \eqref{eqnk} holds for all positive $k$.
                  Note that positive integer $n$ has to be odd as the dimension of source space of an Hopf fibration 
                  and positive integer $k$ is always even (see Remark \ref{evenk}).
                  Direct computation shows that
                  for $n=5$  the solutions are $k=2,4$;
                  for $n=3$ the only solution is $k=2$.
                  It turns out that $a=\frac{1}{4}$ has no solution in our situation. 
                  
                 Therefore, the solution situation of {\bf (B)} is exactly the opposite to {\bf (A)}.
                  
                  By comparing with the $(n, k)$ values of Type {\bf {\bf (I)}} and Type {\bf (II)},
                  we have shown that the LOC segment is stable if and only if it is of Type {\bf (I)};
                  and unstable if and only if it is of Type {\bf (II)}.
                  \end{proof}

 \begin{rem}
              As pointed out in \cite{Simons}, in general a (compactly supported) Jacobi field along a surface can not generate a family of nearby minimal surfaces correspondingly.
              In our case, the Jacobi field along the LOC segment  has a lifting to an ``upstair" Jacobi field along the regular part of the minimal cone LOC in the Euclidean space,
              and it can produce a family of nearby minimal surfaces by lifting geodesics around the LOC segment. 
\end{rem}

{\ }

\section{Some remarks on instability}\label{S6}

In this section, we  further discuss  the close relationship of above calculations with that in \cite{x-y-z0}, where conditions for Type {\bf (II)} Lawson-Osserman cones were discovered. 
The Jacobi field method here is derived from the linearization of geodesic equation;
while, in \cite{x-y-z0}, the ODE requirement \eqref{ODE1} for a minimal surface is 
transformed to a dynamic system with 
linearization analysis at a fixed point that corresponds to the LOC. 
We shall see that the periods of Jacobi fields observed in current paper coincide with those of dynamic behaviors in \cite{x-y-z0},
and moreover, we shall establish an explicit translation between these two perspectives.

{
By \eqref{our t}, the arc-length $s$ is a constant multiple of $r^{n+1}$. 
Therefore, solution \eqref{sJ} in variable $r$ becomes 
\begin{equation}\label{preJ}
J(r) = \tilde C_1 e^{\frac{1+\sqrt{\Delta}}{2} (n+1)\log r} + \tilde C_2 e^{\frac{1-\sqrt{\Delta}}{2} (n+1)\log r}.
\end{equation}
From \eqref{def a}, the solutions are linear combinations of 
\begin{equation}\label{our cal}
\exp\left(\frac{1}{2}\bigg((n+1) \pm \sqrt{(n+1)^2 + 8n\Big(\frac{n}{k(n+k-1)}-1\Big)}\bigg)\log r\right).
\end{equation}
}

In  \cite{x-y-z0}, the ODE \eqref{ODE1} for minimal graphs is rewritten as 
\begin{equation}\label{ODE2}
\aligned
\left\{\begin{array}{ll}
\varphi_t=\psi,\\
\psi_t=-\psi-\Big[\big(n-p+\f{p}{1+\la^2\varphi^2}\big)\psi+\big(n-p+\f{(1-\la^2)p}{1+\la^2\varphi^2}\big)\varphi\Big]\big[1+(\varphi+\psi)^2\big].
\end{array}\right.
\endaligned
\end{equation}
 in variables
$$
t:= \log r, \quad \varphi:= \frac{\rho}{r}, \quad \psi:= \varphi_t.
$$
       Then the point $(\varphi, \psi)=(\tan \theta, 0)$ is a stationary point of the ODE system.
       Denoting $$\varphi_0 = \tan \theta=\sqrt{\dfrac{p-n\lambda^{-2}}{n-p}},$$
        the linearization of the ODE system at the point is 
       $$
       \begin{pmatrix}
       (\phi - \varphi_0)_t\\
       \psi_t
       \end{pmatrix}
       = 
       B
       \begin{pmatrix}
       (\phi - \varphi_0)\\
       \psi
       \end{pmatrix},
       \quad 
       B = \begin{pmatrix}
       0 & 1\\
       2n\big(\frac{n}{k(k+n-1)}-1\big) & -(n+1)
       \end{pmatrix}.
       $$
       The eigenvalues of $B$ are the solutions of 
       $$
       \lambda^2 + (n+1)\lambda - 2n\Big(\frac{n}{k(k+n-1)}-1\Big) = 0
       $$
       and hence are 
       \begin{equation}\label{l12}
       \lambda_{1, 2} = {\frac{1}{2}\bigg(-(n+1)\pm \sqrt{(n+1)^2 + 8n\Big(\frac{n}{k(k+n-1)}-1}\Big)}\bigg).
       \end{equation}
       In our case, these two eigenvalues never equal. Therefore, following standard procedure, we use the matrix $P$ of the corresponding eigenvectors to get 
       $$ B  = P\begin{pmatrix}
       \lambda_1 & \\
       & \lambda_2
       \end{pmatrix} P^{-1}.
       $$
       Although $\lambda_{1, 2}$ and $P$ may be complex, the product on the right hand side is real.
       Then every solution to the linearization of  system  \eqref{ODE2} at $(\varphi_0,0)$ is
    \begin{equation}\label{decom}
       P\begin{pmatrix}
       e^{\lambda_1 t} & \\
       & e^{\lambda_2 t}
       \end{pmatrix} P^{-1} \big(\vec v_0-(\varphi_0,0)\big)+(\varphi_0,0),
     \end{equation}
       for some initial point 
 $\vec v_0$. 
 The product term,
       a linear combination of  
       \begin{equation}\label{b4 cal}
       e^{\lambda t} = \exp\left({\frac{1}{2}\bigg(-(n+1)\pm \sqrt{(n+1)^2 + 8n\Big(\frac{n}{k(k+n-1)}-1\Big)}\bigg)}\log r\right),
       \end{equation}
       is real.
       We see that when the square root is imaginary, the ODE system has a stable spiral singularity at $(\tan\theta, 0)$. 
       This is how \cite{x-y-z0} defines Type (II). 

The square root term in \eqref{b4 cal} is the same as that in 
\eqref{our cal}. 
This explains the coincidence of the periodicity.
Furthermore, for a more clear correspondance on 
the instability of Type (II) Lawson-Osserman cones,
               we shall give an explicit translation from \eqref{b4 cal} to \eqref{our cal} to show the intimate connection.
{\ }
        
              Theoretically, it is well known that \eqref{preJ} is obtained by linearization along $\{\rho=\varphi_0 r\}$;
              while \eqref{decom} is gained by linearization at $(\varphi_0,0)$ exactly corresponding to the LOC curve $\{\rho=\varphi_0 r\}$.
              This is the essential reason that makes it possible and natural 
              to derive Jacobi fields from orbits to \eqref{ODE2} in the $\varphi \psi$-plane around $(\varphi_0,0)$.
              
              We take 	$(n,p,k)=(5,4,6)$ for example with
              $$
       B = \begin{pmatrix}
       0 & 1\\
       -\frac{55}{6} & -6
       \end{pmatrix},
       \ \ \ \ \ 
        \lambda_{1, 2} = -3\pm \dfrac{i}{\sqrt 6},
        \ \ \ \ \ \text{and} 
       $$
         $P$ can be chosen to be
           $$
     \begin{pmatrix}
       \dfrac{1}{ -3+ \frac{i}{\sqrt 6}} &  \dfrac{1}{ -3- \frac{i}{\sqrt 6}}\\
       &\\
       1 & 1
       \end{pmatrix}.
        $$
       Based on \eqref{decom}, the orbit through point $(\varphi_0, -\epsilon)$  is
                  $$
     \begin{pmatrix}
       e^{-3t}\left(3\sqrt 6 \sin\dfrac{t}{\sqrt 6}+\cos\dfrac{t}{\sqrt 6}\right) &
       \sqrt 6 e^{-3t}\sin \dfrac{t}{\sqrt 6} \\
       &\\
       -\dfrac{55}{6}\sqrt 6 e^{-3t} \sin{\dfrac{t}{\sqrt 6}}& 
       e^{-3t}\left(3\sqrt 6 \sin\dfrac{t}{\sqrt 6}-\cos\dfrac{t}{\sqrt 6}\right)
       \end{pmatrix}\cdot
       \begin{pmatrix}
       0\\
       \\
       -\epsilon
       \end{pmatrix}
       +
          \begin{pmatrix}
       \varphi_0\\
       \\
       0
       \end{pmatrix}.
        $$
        
     Although the above expression is an approximate orbit to system \eqref{ODE2},
     it is exactly what we need to consider in the limiting procedure. 
     Note that
     by setting $r=e^t$ one has
     $
     \varphi=\varphi_0-\epsilon \sqrt 6 e^{-3t}\sin \dfrac{t}{\sqrt 6}.
     $
    Therefore,
    $$
     \Big\{\rho=\varphi r=\varphi_0  r-\epsilon\dfrac{\sqrt 6}{r^{2}}\sin \dfrac{\log r}{\sqrt 6}:\ \epsilon\in\mathbb R_+\Big\}
     $$
    provide a family of approximations of geodesics in  $C^1$ sense for $\epsilon\rightarrow 0+$ with the following picture
     \begin{figure}[h]
          \includegraphics[scale=0.8]{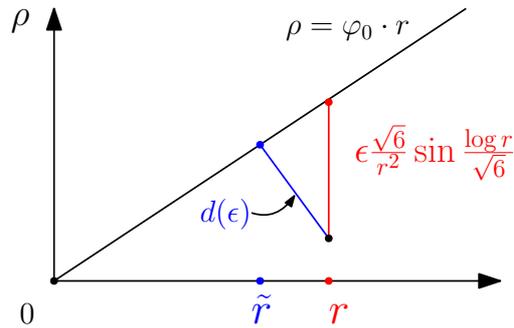}
          \caption{Illustration on the translation back to $r\rho$-plane}
         \end{figure}
     \\  
       which demonstrates 
       $
       r=\tilde r+\dfrac{\varphi_0}{\sqrt{1+\varphi_0^2}} d(\epsilon).
       $
       
       The Euclidean length of the red vertical segment leads to 
       \begin{equation}\label{Relation}
       \sqrt{1+\varphi_0^2}\cdot d(\epsilon)=\epsilon\sqrt 6 \cdot
       \dfrac{\sin\frac{\log\left(\tilde r+\frac{\varphi_0}{\sqrt{1+\varphi_0^2}} d(\epsilon)\right)}{\sqrt 6}}
       {\left(\tilde r+\frac{\varphi_0}{\sqrt{1+\varphi_0^2}} d(\epsilon)\right)^2},
       \end{equation}
       and further, the derivative of the expression in $\epsilon$ at zero gives
        \begin{equation}\label{Relation}
       \sqrt{1+\varphi_0^2}\cdot d'(0)=\sqrt 6 \cdot
       \frac{\sin\frac{\log\tilde r}{\sqrt 6}}
       {\tilde r^2}.
       \end{equation}
       
       Assume that $N$ is the pointing downward unit normal vector field along $\{\rho=\varphi_0 r\}$.
       By the formulation of $g_0$ in \eqref{g0},
       its Euclidean length is of order $-5$ in $\tilde r$.
       Hence, by virtue of the $C^1$ approximation, it produces the Jacobi field
       \begin{equation}\label{scalaroverN1}
       d'(0)=\left(C \tilde r^3\sin\frac{\log\tilde r}{\sqrt 6}\right) N, \text{ for some constant } C.
       \end{equation}
       Moreover, if at the beginning we set $r=e^{t-\frac{\sqrt 6\pi}{2}}$ instead, 
       then \eqref{scalaroverN1} becomes
          \begin{equation}\label{scalaroverN2}
       \tilde d'(0)=\left(\tilde C  \tilde r^3\cos\frac{\log\tilde r}{\sqrt 6}\right) N, \text{ for some constant } \tilde C.
       \end{equation}
       In such way, we see that linear combinations of \eqref{scalaroverN1} and \eqref{scalaroverN2} 
       exhaust all Jacobi fields \eqref{preJ} for this concrete example.
       
       In general, the same conclusion can also be obtained similarly.
       Assume that
           $$
     P=\begin{pmatrix}
       p_{11} &  p_{12}\\
       &\\
       p_{21} & p_{22}
       \end{pmatrix}
       \ \ \text{ and } \ \
       P^{-1}=\begin{pmatrix}
       p^*_{11} &  p^*_{12}\\
       &\\
       p^*_{21} & p^*_{22}
       \end{pmatrix}
       =
       \dfrac{1}{\det P}
       \begin{pmatrix}
       p_{22} &  -p_{12}\\
       &\\
       -p_{21} & p_{11}
       \end{pmatrix}.
        $$
       Now the approximate orbit through $(\varphi_0, -\epsilon)$  is
    $$
     P\begin{pmatrix}
       e^{\lambda_1 t} &  \\
       &\\
       & e^{\lambda_2 t}
       \end{pmatrix}
       P^{-1}
       \cdot
       \begin{pmatrix}
       0\\
       \\
       -\epsilon
       \end{pmatrix}
       +
          \begin{pmatrix}
       \varphi_0\\
       \\
       0
       \end{pmatrix}
 =
     \begin{pmatrix}
       \varphi_0\\
       \\
       0
       \end{pmatrix}
       -\epsilon
       \begin{pmatrix}
       e^{\lambda_1 t}
       p_{11} p^*_{12} + 
      e^{\lambda_2t}
       p_{12} p^*_{22} 
       \\
       \\
       e^{\lambda_1 t}
       p_{21} p^*_{12} + 
      e^{\lambda_2t}
       p_{22} p^*_{22} 
       \end{pmatrix}.
        $$
       Hence the corresponding curves in $r\rho$-plane satisfies 
       \begin{equation}\label{grhor}
       \rho=
       \big(
       \varphi_0-\epsilon
       (e^{\lambda_1 t}
       p_{11} p^*_{12} + 
      e^{\lambda_2t}
       p_{12} p^*_{22} )
       \big)
       r.
       \end{equation}
      By
      $p_{11} p^*_{12}+ p_{12} p^*_{22}=0$
     and \eqref{l12},
      we have 
      \begin{align*}
       \rho &=\big(\varphi_0-\epsilon p_{11}p^*_{12}(e^{\lambda_1t}-e^{\lambda_2t})\big)r\\
       &=\Big(\varphi_0-\epsilon p_{11}p^*_{12}(2e^{\Re \lambda_1t}\sin(\Im\lambda_1 t) i\big)\Big)r\\
       &=\Big(\varphi_0+\epsilon \frac{p_{11}p_{12}}{\det P} (2e^{\Re \lambda_1t}\sin(\Im\lambda_1 t) i\big)\Big)r.
                                                       \end{align*}
We claim that $\frac{p_{11}p_{12}}{\det P} i$ is a real nonzero number.
Note that
$$\det \begin{pmatrix}
       p_{11} &  \overline{p_{11}}\\
       &\\
       p_{21} & \overline{p_{21}}
       \end{pmatrix}
       =2\Im(p_{11}\overline{p_{21}}) i,
\ \ \text{ and } \ \
       \begin{pmatrix}
       \overline{p_{11}}\\
       \\
       \overline{p_{21}}\\
       \end{pmatrix}
       =\alpha\cdot
       \begin{pmatrix}
       p_{12}\\
       \\
       p_{22}
       \end{pmatrix}
       \ \ \text{ for some }\alpha\neq0 \in \mathbb C.
       $$
So
        \begin{align*}
       \dfrac{p_{11}p_{12}}{\det P} i
      & =\dfrac{\|p_{11}\|^2}{\alpha \det P}i
      =\dfrac{\|p_{11}\|^2}{2\Im(p_{11}\overline{p_{21}})}
      \in \mathbb R.
         \end{align*}
      
      Similar to the preceding example, by employing $r=e^t$,
      we have the following relation for signed $d(\epsilon)$
      \begin{equation}\label{Relation2}
       \sqrt{1+\varphi_0^2}\cdot d(\epsilon)
       = - \dfrac{\epsilon\|p_{11}\|^2}{\Im(p_{11}\overline{p_{21}})} \cdot
       \dfrac{\sin\left(\Im\lambda_1\log\left(\tilde r+\frac{\varphi_0}{\sqrt{1+\varphi_0^2}} d(\epsilon)\right)\right)}
       {\left(\tilde r+\frac{\varphi_0}{\sqrt{1+\varphi_0^2}} d(\epsilon)\right)^{\frac{n-1}{2}}},
       \end{equation}
       Since the Euclidean length of $N$ is of order $-n$ in $r$, 
       \begin{equation}\label{scalaroverN3}
       d'(0)=\left(C \tilde r^{\frac{n+1}{2}}
       \sin\big(\Im\lambda_1\log\tilde r\big)
       \right) N, \text{ for some constant } C.
       \end{equation}
      By using $r=e^{t-\frac{\pi}{2\Im\lambda_1}}$ (which generates a rescaling on the $r\rho$-plane) instead, 
    \eqref{scalaroverN3} becomes
          \begin{equation}\label{scalaroverN4}
       {\tilde d}'(0)=
       \left(\tilde C \tilde r^{\frac{n+1}{2}}
       \cos\big(\Im\lambda_1\log\tilde r\big)
       \right) N, \text{ for some constant } \tilde C.
              \end{equation}
Thus linear combinations of \eqref{scalaroverN3} and \eqref{scalaroverN4} 
      provide  all Jacobi fields \eqref{preJ}.
      
{\ }

\section{Some remarks on stability}\label{S7}
In this section, we shall show that the LOC curve
for {Type \bf{(I)}} in the quotient space is not merely stable minimal but in fact length-minimizing, 
namely a ray in the sense of classical Riemannian geometry, in its certain angular neighborhood.

To explain local length-minimality, i.e. being length-minimizing in some neighborhood of the LOC curve,
we shall construct a geodesic foliation in some angular neighborhood around the LOC curve.
\begin{figure}[h]
          \includegraphics[scale=0.62]{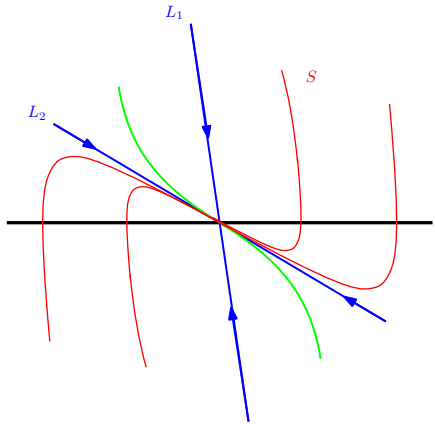}
          \caption{Infinitesimal orbits near $(\varphi_0,0)$ for {Type (I)} in the $(\varphi,\psi)$-plane}
         \label{F2}
         \end{figure}
Note that $\Delta=(n+1)^2 + 8n\big(\frac{n}{k(n+k-1)}-1\big)>0$. 
Let $L_{1,2}$ be the line through $(\varphi_0,0)$ with slopes 
         $\frac{1}{2}[-(n+1)\mp \sqrt{\Delta}]$.
         Apparently, all orbits accumulate to $L_2$ expect one to $L_1$ in the infinitesimal model.
Hence, starting from $(\varphi_1,0)$ where $\varphi_1$ is sufficiently close to $\varphi_0$ and $\varphi_1>\varphi_0$, 
there exists an orbit of \eqref{ODE2} limiting to $(\varphi_0,0)$ below the $\varphi$-axis with decreasing $\varphi$ values in $(\varphi_0,\varphi_1]$.
This orbit corresponds to a geodesic curve $\gamma_1$ in the $r\rho$-plane which has strictly decreasing slopes within $(\varphi_0,\varphi_1]$.
%
%
Similar discussion works for the other side (with $\varphi_2<\varphi_0$).
                                                                           \begin{figure}[h]
          \includegraphics[scale=0.45]{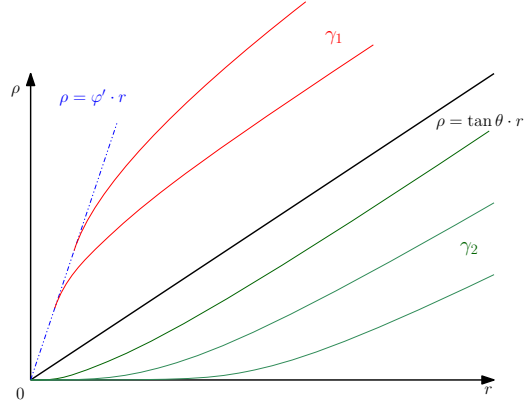}
          \caption{Geodesic foliation around the ray $\rho=\tan\theta\cdot r$}
         \end{figure}
However it is convenient to use the unique orbit from $(0,0)$ to $(\varphi_0,0)$, i.e., $\varphi_2=0$, with increasing $\varphi\in (0,\varphi_0)$ explored in \cite{x-y-z0},
which corresponds to a geodesic curve $\gamma_2$ in the $r\rho$-plane with strictly increasing slope within $(0,\varphi_0)$.
Thus we can gain
 a foliation of homothetic geodesics (i.e., obtained by dilations of $\gamma_1$ and $\gamma_2$) in the angular region
$\mathscr S$ between $\{\rho=0\}$ and $\{\rho=\varphi_1\cdot r\}$.

Therefore, the LOC curve is length-minimizing in the angular region $\mathscr S$ following a standard calibration argument (the fundamental theorem of calibrated geometries) 
using the calibration form
$\omega=v \lrcorner\ \tau$ where $v$ is the unit tangent vector fields along the oriented foliation and $\tau$ the oriented unit volume form of $\mathscr S$,
see \cites{b-d-g, HL2, HL1} for details.

{\ }

 \begin{bibdiv}
\begin{biblist}
\bibitem[BdGG69]{b-d-g}
E. Bombieri, E. De Giorgi and E. Giusti: Minimal cones and the Bernstein problem. Invent. Math. {\bf 7} (1969), 243--268.

\bibitem[BdGM69]{b-d-m} E. Bombieri, E. de Giorgi and M. Miranda: Una maggiorazione a priori relativa alle ipersuperfici minimali non parametriche. Arch. Rational Mech. Anal. {\bf 32} (1969), 255--267.

\bibitem[Bro66]{b} J. Brothers:
{Integral geometry in homogeneous spaces},.
Trans. Amer. Math. Soc.
 {\bf 124} (1966), 480--517.


\bibitem[dG57]{de} E. de Giorgi: Sulla differentiabilit\`{a} e l'analiticit\`{a} delle estremali degli integrali multipli regolari. Mem. accad. Sci. Torino, s. III, parte I, (1957), 25--43.

\bibitem[Dou31]{d} J. Douglas: Solutions of the Problem of Plateau: Trans. Amer. Math. Soc. {\bf 33} (1931),
263--321.

\bibitem[Fed96]{fe} H. Federer: 
{Geometric Measure Theory}.
Springer-Verlag Berlin Heidelberg, 1996.

\bibitem[JS68]{j-s} H. Jenkins and J. Serrin: The Dirichlet problem for the minimal surface equation in higher dimensions. J. Reine
Angew. Math. {\bf 229} (1968), 170--187.

\bibitem[HL71]{h-l}
W.Y. Hsiang and H. B.~Lawson, Jr.:
Minimal submanifolds of low cohomogeneity.
J. Diff. Geom. {\bf 5} (1971), 1--38.

\bibitem[HL82a]{HL2} R. Harvey and H. B. Lawson, Jr.: {Calibrated geometries}. Acta Math. {\bf 148} (1982), 47--157.

\bibitem[HL82b]{HL1} R. Harvey and H. B. Lawson, Jr.: Calibrated Foliations.  Amer. J. Math. {\bf 104} (1982), 607--633.

\bibitem[Law72]{l} H. B.~Lawson, Jr.:
The Equivariant Plateau Problem and Interior Regularity.
Trans. Amer. Math. Soc. {\bf 173} (1972), 231--249.


\bibitem[LO77]{l-o} H. B.~Lawson, Jr. and R.~Osserman:
Non-existence, non-uniqueness and irregularity of solutions to the
minimal surface system. Acta Math. {\bf 139} (1977), 1--17.


\bibitem[MT39]{m-t} M. Morse and C. Tompkins: Existence of minimal surfaces of general critical type.
Ann. of Math. 40 (1939), 443--472.

\bibitem[Mos60]{m1} J. Moser: A new proof of de Giorgi's theorem concerning the regularity problem for elliptic differential equations.
Comm. Pure Appl. Math. {\bf 13} (1960), 457--468.



\bibitem[Rad30]{r} T. Rad\'o: On Plateau's problem. Ann. Math. {\bf 31} (1930), 457--469.

\bibitem[Rad33]{r2} T. Rad\'o: On the Problem of Plateau. Ergebnisse der Mathematik und iher Grenzgebiete, Vol. 2, Springer, 1933.

\bibitem[
{Sim68}]{Simons}
J. Simons: {Minimal varieties in riemannian manifolds}.
Ann. Math.
\textbf{88} (1968), 62--105.






\bibitem[XYZ19]{x-y-z0}X. W. Xu, L. Yang and Y. S. Zhang: 
{Dirichlet boundary values on Euclidean balls with infinitely many solutions for the minimal surface system}. 
J.M.P.A. {\bf 129} (2019), 266--300.

\bibitem[XYZ18]{x-y-z}X. W. Xu, L. Yang and Y. S. Zhang: New area-minimizing Lawson-Osserman cones. Adv. Math. {\bf 330} (2018), 739--762.

\bibitem[Zha1]{z}Y. S. Zhang: 
{On non-existence of solutions of the Dirichlet problem for the minimal surface system}, preliminary version available at arXiv:1812.11553. 

\bibitem[Zha2]{zh}Y. S. Zhang: 
{Recent progress on the Dirichlet problem for the minimal surface system and minimal cones}, to appear, preliminary version available arXiv:1906.04558. 
\end{biblist}
\end{bibdiv}
\end{document}